\numberwithin{equation}{section}
\newtheorem{thm}{Theorem}[section]
\newtheorem{cor}[thm]{Corollary}
\newtheorem{prop}[thm]{Proposition}
\newtheorem{con}[thm]{Conjecture}
\theoremstyle{definition}
\newtheorem{definition}[thm]{Definition}
\newtheorem{defn}[thm]{Definition}
\theoremstyle{remark}
\newtheorem{remark}[thm]{Remark}
\renewcommand{\S}{\mathfrak S}
\newcommand{\s}{\sigma}
\newcommand\C{{\mathbb{C}}}
\newcommand\R{{\mathbb R}}
\newcommand\Z{{\mathbb{Z}}}
\newcommand\PP{{\mathbb{P}}}
\newcommand\N{{\mathbb{N}}}
\newcommand\ps{{\rm ps}}
\newcommand\bq{\begin{equation}}
\newcommand\eq{\end{equation}}
\newcommand\beq{\begin{eqnarray*}}
\newcommand\eeq{\end{eqnarray*}}
\newcommand\ben{\begin{enumerate}}
\newcommand\een{\end{enumerate}}
\newcommand\bit{\begin{itemize}}
\newcommand\eit{\end{itemize}}
\newcommand\des{{\rm des}}
\newcommand\exc{{\rm exc}}
\newcommand\inv{{\rm inv}}
\newcommand\maj{{\rm maj}}
\newcommand\sg{{\mathfrak S}}
\newcommand\Des{{\rm DES}}
\newcommand\fix{{\rm fix}}
\newcommand\ch{{\rm ch}}
\newcommand\x{{\mathbf x}}
\def\zz{{\mathbb Z}}
\def\ff{{\mathbb F}}
\def\pp{{\mathbb P}}
\def\rr{{\mathbb R}}
\def\ff{{\mathcal F}}
\newlength\cellsize \setlength\cellsize{15\unitlength}
\newcommand\cellify[1]{\def\thearg{#1}\def\nothing{}%
\ifx\thearg\nothing
\vrule width0pt height\cellsize depth0pt\else
\hbox to 0pt{\usebox2\hss}\fi%
\vbox to 15\unitlength{
\vss
\hbox to 15\unitlength{\hss$#1$\hss}
\vss}}
\newcommand\tableau[1]{\vtop{\let\\=\cr
\setlength\baselineskip{-16000pt}
\setlength\lineskiplimit{16000pt}
\setlength\lineskip{0pt}
\halign{&\cellify{##}\cr#1\crcr}}}
\newcommand\expath[1]{%
\hbox to 0pt{\usebox3\hss}%
\vbox to 15\unitlength{
\vss
\hbox to 15\unitlength{\hss$#1$\hss}
\vss}}
\begin{document}

\title[Variations of Eulerian polynomials]{Gamma-positivity of variations of Eulerian polynomials}

\author[Shareshian]{John Shareshian$^1$}
\address{Department of Mathematics, Washington University, St. Louis, MO 63130}
\thanks{$^{1}$Supported in part by NSF Grants
DMS 1202337 and DMS 1518389}
\email{shareshi@math.wustl.edu}

\author[Wachs]{Michelle L. Wachs$^2$}
\address{Department of Mathematics, University of Miami, Coral Gables, FL 33124}
\email{wachs@math.miami.edu}
\thanks{$^{2}$Supported in part by NSF Grants
DMS 1202755 and DMS 1502606}

\begin{abstract} An identity of Chung, Graham and Knuth involving binomial coefficients and Eulerian numbers 
motivates our study of a class of polynomials that we call  binomial-Eulerian polynomials.  These polynomials share several properties with the Eulerian polynomials.  For one thing, they are   $h$-polynomials of  simplicial polytopes, which gives a geometric interpretation of the fact that they are palindromic and unimodal.   A formula of Foata and Sch\"utzenberger shows that the Eulerian polynomials have a stronger property, namely $\gamma$-positivity, and a formula of Postnikov, Reiner and Williams does the same for the binomial-Eulerian polynomials.   We obtain  $q$-analogs of both the Foata-Sch\"utzenberger formula and  an alternative to the  Postnikov-Reiner-Williams formula, and we show that these $q$-analogs are specializations of  analogous symmetric function identities.
Algebro-geometric interpretations of these  symmetric function analogs are presented.
\end{abstract}

\date{February 2017; revised June 2018}


\maketitle

\vbox{
\tableofcontents
}

 \section{Introduction} 
 
 In \cite{ChGrKn}, Chung, Graham, and Knuth give several  
 proofs of the following  interesting symmetry involving Eulerian numbers $a_{n,j}$ and binomial coefficients.
 For nonnegative integers $r,s$,
\begin{equation} \label{cgkid}
\sum_{m=1}^{r+s}{{r+s} \choose {m}} a_{m,r-1}=\sum_{m=1}^{r+s}{{r+s} \choose {m}} a_{m,s-1}.
\end{equation}
A  $q$-analog of this identity was  subsequently obtained independently by
Chung and Graham \cite{ChGr} and Han, Lin, and Zeng  \cite{HaLiZe}.

Equation (\ref{cgkid}) is equivalent to palindromicity of the polynomial
$$\tilde A_n(t) = \sum_{j=0}^n \tilde a_{n,j} t^j := 1+ t \sum_{m=1}^n  \binom n m A_m(t),$$ for all $n \ge 0$,
where $A_m(t)$ is the Eulerian polynomial.   We refer to $\tilde A_n(t) $ as a binomial-Eulerian polynomial and $\tilde a_{n,j}$ as a binomial-Eulerian number.   It is well known and easy to prove that the Eulerian polynomials are  palindromic as well.  Hence it is natural to ask whether the binomial-Eulerian polynomials share any other properties with the Eulerian polynomials, such as unimodality.

A polynomial $A(t)=\sum_{j=0}^d a_j t^j \in \R[t]$ is said to be {\em palindromic} if  $a_j = a_{d-j}$ for all $j=0,\dots,d$, and it  is said to be  {\em positive} and
{\em unimodal} if for some $c$
   $$0 \le a_0 \le a_1 \le \cdots \le a_c  \ge \cdots \ge a_{d-1} \ge a_d \ge 0.$$  For example, 
   $A_5(t)=1 + 26 t + 66 t^2 + 26 t^3 +  t^4 $ is  clearly palindromic, positive, and unimodal.
Many important polynomials arising in algebra, combinatorics, and geometry are palindromic, positive and unimodal, see e.g., \cite{st, st2, br}. 

One can easily see that $A(t)$   is  palindromic if and only if there exist  $\gamma_0, \dots, \gamma_{\lfloor {d \over 2} \rfloor} \in \R$
such that
\vspace{-.1in}\begin{equation} \label{gameq} A(t) = \sum_{k=0}^{\lfloor {d \over 2} \rfloor} {\gamma_k} t^k (1+t)^{d-2k}.\end{equation}
The palindromic polynomial $A(t)$ is  said to be {\em $\gamma$-positive} if $\gamma_k \ge 0$ for all $k$.    It is  well known and not difficult to see that 
$\gamma$-positivity implies  unimodality. 

The Eulerian polynomials $A_n(t)$ are $\gamma$-positive as  is evident from the Foata-Sch\"utzenberger  formula \cite[Theorem 5.6]{FoSc1},
\begin{equation} \label{eulereq}A_n(t) = \sum_{k=0}^{\lfloor \frac {n-1} 2 \rfloor} \gamma_{n,k}\, t^k (1+t)^{n-1-2k}, \end{equation} 
where
$\gamma_{n,k} =  |\Gamma_{n,k}|$ 
and $\Gamma_{n,k}$ is the set of permutations $\sigma \in \sg_n$ with 
\begin{itemize}
\item no double descents\footnote{The terminology used here is defined in Section~\ref{prelim}},
\item  no final descent, 
\item $\des(\sigma)=k$.
\end{itemize}
For example $A_5(t) = 1 + 26 t + 66 t^2 + 26 t^3 +  t^4 $ is $\gamma$-positive since $$A_5(t)= 1t^0 (1+t)^4 + 22 t^1 (1+t)^2 + 16 t^2 (1+t)^0 .$$

Recent interest  in $\gamma$-positivity stems from Gal's strengthening \cite{Ga} of  the Charney-Davis conjecture \cite{ChD} 
by asserting that the $h$-polynomial of every  flag simplicial sphere is $\gamma$-positive\footnote{The terminology used here is defined in Section~\ref{GalSec}.}.   Since, as is well known, the Eulerian polynomials  are the $h$-polynomials of dual permutohedra, the Foata-Sch\"utzenberger formula confirms Gal's conjecture for  dual permutohedra.

The permutohedron is an example of a  chordal nestohedron.  In \cite[Section 11.2]{prw}, Postnikov, Reiner, and Williams confirm Gal's conjecture for all dual chordal nestohedra by giving explicit combinatorial formulae for the $\gamma$-coefficients.  Another example of a chordal nestohedron, discussed in \cite[Section 10.4]{prw},  is  the stellohedron, and the  $h$-polynomial of its dual  turns out to be equal to $\tilde A_n(t)$.  It follows that palindromicity of  $\tilde A_n(t)$  is equivalent to the Dehn-Sommerville equations for the dual stellohedron.  

The $\gamma$-positivity formula of Postnikov, Reiner, and Williams  
in the case of the stellohedron says that
\begin{equation} \label{PRWeq} 
\tilde A_n(t)= \sum_{k=0}^{\lfloor \frac {n} 2 \rfloor} \bar \gamma_{n,k}\, t^k (1+t)^{n-2k}, \end{equation} 
where
$\bar\gamma_{n,k}$ is the number of $\sigma \in \sg_{n+1} $ such that $\sigma$ has no double descents,  no final descent, $\sigma(1) < \sigma(2) < \dots < \sigma (m)=n+1 $, for some $m \ge1$, and $\des(\sigma)=k$. 

Here we obtain 
a $\gamma$-positivity formula\footnote{An alternative proof of (\ref{bineulereq}) using poset topological techniques will appear in~\cite{GoWa}.} for $\tilde A_n(t)$ that is somewhat simpler than the Postnikov-Reiner-Williams formula and is similar to the Foata-Schutzenberger formula for $A_n(t)$.  
 For all $n \ge 1$,  
\begin{equation} \label{bineulereq} \tilde A_n(t) = \sum_{k=0}^{\lfloor \frac {n} 2 \rfloor} \tilde \gamma_{n,k}\, t^k (1+t)^{n-2k}, \end{equation} 
where $\tilde\gamma_{n,k} =  |\tilde\Gamma_{n,k}|$ 
and $\tilde \Gamma_{n,k}$ is the set of permutations $\sigma \in \sg_n$ with 
\begin{itemize}
\item no double descents,
\item $\des(\sigma)=k$.
\end{itemize}
(A nice bijection between $\tilde \Gamma_{n,k}$ and the set of permutations  enumerated in the Postnikov-Reiner-Williams formula (\ref{PRWeq}) was obtained by Ellzey \cite{El}.)
Moreover, we present  $q$-analogs  of this $\gamma$-positivity formula (\ref{bineulereq}) and of the  Foata-Sch\"utzenberger formula (\ref{eulereq}), and  observe that they are specializations of  analogous symmetric function identities.
Algebro-geometric interpretations of these  symmetric function analogs are also presented, which suggest an equivariant version of the Gal phenomenon.

The $q$-analogues of the Eulerian numbers and Eulerian polynomials  that we consider   were first examined in previous  work  \cite{ShWa1, ShWa2} of the authors on the joint distribution of the excedance statistic and the major index\footnote{The permutation statistics terminology is defined in Section~\ref{prelim}.}. They are  used in the Chung-Graham, Han-Ling-Zeng $q$-analog of (\ref{cgkid}) mentioned above. The $q$-analog $a_{n,j}(q)$ of the Eulerian number $a_{n,j}$ and the $q$-analog $A_n(q,t)$ of the Eulerian polynomial $A_n(t) $ are polynomials in $\Z[q]$ and $\Z[q][t]$, respectively, defined by
\begin{equation} \label{qEulerDef} A_n(q,t) = \sum_{j=0}^{n-1} a_{n,j}(q) t^j := \sum_{\sigma \in \S_n} q^{\maj(\sigma) -\exc(\sigma)} t^{\exc(\sigma)},\end{equation}
for $n \ge 1$, and $A_n(q,t) := 1$, for $n =0$.
For example,
 \begin{eqnarray*} A_2(q,t) &=& 1+t \\ A_3(q,t) &=& 1+(2+q+q^2) t + t^2 \\
 A_4(q,t) &=& 1+ (3+2q+3q^2+2q^3 + q^4) t +  (3+2q+3q^2+2q^3 + q^4)t^2 + t^3. 
 \end{eqnarray*}
  Another combinatorial description of  $A_n(q,t)$ is given in more recent work  \cite{ShWa3, ShWa4} of the authors.

In \cite{ShWa1,ShWa2}, the authors obtain a $q$-analog of  Euler's formula for the exponential generating function of the Eulerian polynomials,
\begin{equation}\label{qEulereq} \sum_{n \ge 0 } A_n(q,t) \frac {z^n} {[n]_q!} = \frac{\exp_q(z) (1-t)} {\exp_q(tz) - t\exp_q(z)}.\end{equation}
(As is standard, $[n]_q!:=\prod_{j=1}^n[j]_q$, where $[j]_q:=\sum_{i=0}^{j-1}q^i$.  Also, $\exp_q(z) :=\sum_{n \ge 0} \frac {z^n}{[n]_q!}$.)

The $q$-analog $\tilde a_{n,j}(q)$ of the binomial-Eulerian number $\tilde a_{n,j}$ and the $q$-analog $\tilde A_n(q,t)$ of the binomial-Eulerian polynomial $\tilde A_n(t) $ are polynomials in $\Z[q]$ and $\Z[q][t]$, respectively, defined by
$$\tilde A_n(q,t) =  \sum_{j=0}^n \tilde a_{n,j}(q) t^j := 1+ t \sum_{m=1}^n  \binom n m_q A_m(q,t).$$ 
For example,
\begin{eqnarray*} \tilde A_2(q,t) &=& 1+(2+q)t +t^2\\ \tilde A_3(q,t) &=& 1+(3+2q+2q^2) t + (3+2q+2q^2)t^2 + t^3.  \end{eqnarray*}

 The following $q$-analog of (\ref{eulereq}) is  proved in \cite[Equations (1.4) and (6.1)]{LiShWa} and  also appears in Lin and Zeng  \cite{LiZe} (with a different proof).  For $n \ge 1$,
\begin{equation} \label{introqFSeq} A_n(q,t) = \sum_{k=0}^{\lfloor \frac {n-1} 2 \rfloor} \gamma_{n,k}(q) \, t^k (1+t)^{n-1-2k}, \end{equation}  
where
$$\gamma_{n,k}(q) := \sum_{\sigma \in \Gamma_{n,k}} q^{\inv(\sigma) }.$$
Here we give an alternative derivation\footnote{This approach is discussed in earlier work  \cite[Remark 5.5]{ShWa2} of the authors, though the $\gamma_{n,k}(q)$ are not given.} of (\ref{introqFSeq}) and we derive the $q$-analog of (\ref{bineulereq}),
\begin{equation} \label{introqBFSeq} \tilde A_n(q,t) = \sum_{k=0}^{\lfloor \frac {n} 2 \rfloor} \tilde  \gamma_{n,k}(q) \, t^k (1+t)^{n-2k}, \end{equation}  
where 
$$ \tilde \gamma_{n,k}(q) := \sum_{\sigma \in \tilde\Gamma_{n,k}} q^{\inv(\sigma) }.$$

We derive (\ref{introqFSeq}) and (\ref{introqBFSeq}) by specializing  analogous symmetric function identities. 
 These identities involve the symmetric function polynomials $Q_n(\x,t)$ and $\tilde Q_n(\x,t)$, which specialize to $A_n(q,t)$ and $\tilde A_n(q,t)$, respectively, and are  defined as follows.  For $\x=(x_1,x_2,\dots)$, let
 \begin{equation} \label{defQ} \sum_{n \ge 0} Q_n(\x,t) z^n := \frac{(1-t)H(z)} {H(tz) - tH(z)} ,\end{equation} 
 where  $$H(z) := \sum_{n\ge 0} h_n({\x}) z^n,$$ and $h_n(\x)$ is the complete homogeneous symmetric function of degree $n$.  
For $n \ge 0$, let
\begin{equation} \label{defBQ} \tilde Q_{n}(\x,t) :=  h_n(\x) + t \sum_{m=1}^n h_{n-m} (\x) Q_m(\x,t).\end{equation}

For all $n \ge 1$ and $k \ge 0$, let
$$\gamma_{n,k}(\x) := \sum_{D \in  \mathcal H_{n,k}} s_D(\x) ,$$
where $s_D(\x)$ is the skew Schur function of shape $D$ and $\mathcal H_{n,k}$ is  the set of skew hooks of size $n$ for  which $k$ columns have size 2 and  the remaining $n-2k$ columns, including the last column, have size 1.
 From  an interpretation of $Q_n(\x,t)$ due to Gessel \cite{Ge}, we have the  identity,
\begin{equation} \label{introsymFSeq} Q_{n}(\x,t) = \sum_{k=0}^{\lfloor \frac {n-1} 2 \rfloor} \gamma_{n,k}(\x)  \, t^k (1+t)^{n-1-2k},\end{equation}   for all $n \ge 1$.
We use (\ref{introsymFSeq}) to derive the  identity 
\begin{equation} \label{introqBEgammaeq}  \tilde Q_n(\x,t) = \sum_{k=0}^{\lfloor \frac {n} 2 \rfloor} \tilde \gamma_{n,k}(\x) \, t^k (1+t)^{n-2k}, \end{equation}  
where  $$\tilde \gamma_{n,k}(\x) = \sum_{H \in \tilde{\mathcal H}_{n,k}} s_H(\x)$$ and $\tilde{\mathcal H}_{n,k}$ is  the set of skew hooks of size $n$ for  which $k$ columns have size 2 and the remaining $n-2k$ columns have size 1.

 It was shown by Danilov 
 and Jurkiewicz  (see \cite[eq.~(26)]{st2}) 
  that the $h$-polynomial of a simplicial polytope is equal to the Poincar\'e polynomial of the toric variety associated with the polytope.  In \cite{st2} Stanley, using a formula of Procesi \cite{Pr}, gives a representation theoretic interpretation of $Q_n(\x,t)$ involving the toric variety associated with the dual permutohedron.  This and an equivariant version of the hard Lefschetz theorem yield a geometric proof that $Q_n(\x,t)$ is palindromic, Schur-positive and Schur-unimodal.  Here we give an analogous interpretation for $\tilde Q_n(\x,t)$ involving the dual stellohedron.   This leads to the formulation of an equivariant version of  the Gal phenomenon, with the symmetric group actions on the dual permutohedron and the dual stellohedron  exhibiting this phenomenon. 
 
 The paper is organized as follows.  In Section~\ref{prelim}, we recall some basic facts about Eulerian polynomials, permutation statistics, $q$-analogs, and symmetric functions.  The  formulae (\ref{introsymFSeq}) and (\ref{introqBEgammaeq}) are obtained in Section~\ref{SchurSec} and direct proofs of palindromicity and Schur-unimodality of $Q_{n}(\x,t)$ and $\tilde Q_n(\x,t)$ are given.  In Section~\ref{qSec} we show how these formulae specialize to (\ref{introqFSeq}) and (\ref{introqBFSeq}), respectively.   Algebro-geometic interpretations of the results in Section~\ref{SchurSec} are presented in  Section~\ref{GalSec}.  In Section~\ref{DerSec}, we discuss derangement analogs of the results of the previous sections.

\section{Preliminaries} \label{prelim}

 While investigating divergent series in \cite{Eu}, Euler showed that, for each positive integer $n$, there is a monic polynomial $A_n(t) \in \zz[t]$ of degree $n-1$ such that
\[
\sum_{k \geq 0}(k+1)^n t^k =\frac{A_n(t)}{(1-t)^{n+1}}.
\]
Let us write
\[
A_n(t)=\sum_{j=0}^{n-1}a_{n,j}t^j.
\]
The coefficients $a_{n,j}$ of the {\it Eulerian polynomial} $A_n(t)$ are called {\it Eulerian numbers}.  

For a permutation $\s \in \S_n$, the {\it descent set} of $\s$ is
\[
\Des(\s):=\{i \in [n-1]:\s(i)>\s(i+1)\}
\]
and the {\it descent number} of $\s$ is
\[
\des(\s):=|\Des(\s)|.
\]
The fact that
\begin{equation} \label{deseq}
\sum_{\s \in \S_n}t^{\des(\s)}=A_n(t)
\end{equation}
for all $n$ seems to have been observed first by Riordan in \cite{Ri}.  Earlier, MacMahon had shown in \cite[Vol. I, p.186]{MacM} that, with the {\it excedance number} of $\s \in \S_n$ defined as
\[
\exc(\s):=|\{i \in [n-1]:\sigma(i)>i\}|,
\]
the equation
\begin{equation} \label{desexc}
\sum_{\s \in \S_n}t^{\exc(\s)}=\sum_{\s \in \S_n}t^{\des(\s)}
\end{equation}
holds for all $n$.

Recall that the {\it $q$-binomial coefficients} are defined by
\[
{{n} \choose {k}}_q:=\left\{ \begin{array}{ll} \frac{[n]_q!}{[k]_q![n-k]_q!} & 0 \leq k \leq n, \\ 0 & \mbox{otherwise}. \end{array} \right.
\]

There are two additional fundamental permutation statistics, the {\it major index}
\[
\maj(\s):=\sum_{i \in \Des(\s)} i
\]
and  the  {\it inversion number }
$$\inv(\s) := |\{ (i,j) : 1 \le i < j \le n, \s(i) > \s(j) \}| .$$
MacMahon  \cite{MacM} introduced the major index and proved  the first equality in 
\begin{equation*} \label{majeq}
\sum_{\s \in \S_n}q^{\maj(\s)}=[n]_q!=\sum_{\s \in
\S_n}q^{\inv(\s)}
\end{equation*}
after the second equality had been obtained in \cite{Ro} by
Rodrigues.

In \cite{ShWa1,ShWa2}, the authors define a fixed point version of the $q$-Eulerian polynomial, which refines the $q$-Eulerian polynomial given in   (\ref{qEulerDef}). For $n \ge 1$, let
$$A_n(q,t,r) :=  \sum_{\sigma \in \S_n} q^{\maj(\sigma) -\exc(\sigma)} t^{\exc(\sigma)} r^{\fix(\sigma)},$$
where $\fix(\sigma)$ is the number of fixed points of $\sigma$, and let $A_0(q,t,r) :=1$.  So $A_n(q,t,1) = A_n(q,t)$ for all $n \ge 0$. In \cite{ShWa1,ShWa2},  the refinement of (\ref{qEulereq}),
\begin{equation}\label{qfixEulereq} \sum_{n \ge 0 } A_n(q,t,r) \frac {z^n} {[n]_q!} = \frac{\exp_q(rz) (1-t)} {\exp_q(tz) - t\exp_q(z)}\end{equation}
is derived.

As mentioned in the introduction,  the Foata-Schutzenberger formula  (\ref{eulereq}) establishes $\gamma$-positivity of the Eulerian polynomials and the Postnikov-Reiner-Williams formula (\ref{PRWeq}) establishes $\gamma$-positivity of the binomial-Eulerian polynomials.  We now give precise definitions of the terminology used in these formulas.
 We say $\sigma \in \sg_n$ has  
\begin{itemize}
\item a {\it double descent}  if there exists $i \in [n-2]$ such that $\sigma(i) > \sigma(i+1) > \sigma(i+2)$
\item an {\it initial descent} if $\sigma(1) > \sigma(2)$ 
\item  a {\it final descent} if $\sigma(n-1) > \sigma(n)$.
\end{itemize} 

We say that a polynomial $f(q) \in \R[q] $ is $q$-positive if its  coefficients are nonnegative. Given polynomials $f(q),g(q) \in \R[q]$ we say that $f(q) \le_q g(q)$ if $g(q) - f(q)$ is $q$-positive.  
More generally, let $R$ be an algebra over $\R$ with basis $b$.  An element $s\in R$ is said to be $b$-{\em positive} if the expansion of $s$ in the basis $b$ has nonnegative coefficients.  Given $r,s \in R$, we say that $r \le_b s$ if $s-r$ is $b$-positive. 

The $\R$-algebras  considered in this paper are  $R$= $\R$,  $\R[q]$, and  the algebra $\Lambda$ of symmetric functions over $\R$.
 If $R=\R$ and $b= \{1\}$ then $b$-positive is the same as positive and $<_b$ is the usual numerical $<$ relation.  If $R = \R[q]$ and $b = 
\{q^i: i \in \N\}$ then $b$-positive is  what we called $q$-positive above and $<_b$ is the same as $<_q$.  For $R= \Lambda$, we consider the  basis of Schur functions $\{s_\lambda(\x) : \lambda  \in \cup_{n\ge 0} \mbox{ Par}(n) \} $ and the basis of complete homogeneous symmetric functions $\{h_\lambda(\x): \lambda \in \cup_{n\ge 0} \mbox{ Par}(n)\} $, where $\mbox{ Par}(n)$ is the set of partitions of $n$.  It is a basic fact that $h$-positive implies Schur-positive (see for example \cite[Proposition 7.18.7]{St}).

\begin{definition}\label{paldef} Let $R$ be an $\R$-algebra with basis $b$.   We say that a polynomial
$A(t):=a_0 + a_1 t +\cdots  + a_{n} t^{n}\in R[t]$ is 
\begin{itemize}
\item $b$-{\em positive} if each coefficient $a_i$ is $b$-positive  
\item  $b$-{\em unimodal} if for some $c$,
$$a_0 \le_b a_1\le_b \cdots \le_b a_c \ge_b a_{c+1} \ge_b a_{c+2} \ge_b \cdots \ge_b a_n,$$ 
\item {\em palindromic} with center of symmetry $\frac n 2$ if $a_j = a_{n-j}$  for $0 \le j \le n$,
\item {\em $b$-$\gamma$-positive} if there exist $b$-positive $\gamma_0,\dots,  \gamma_{\lfloor {d \over 2} \rfloor} \in R$ such that 
$$A(t) = \sum_{k=0}^{\lfloor {d \over 2} \rfloor} {\gamma_k}  t^k (1+t)^{d-2k}.$$  
\end{itemize}
\end{definition} 

The following results are well known, at least in the case that $R= \R$ (see \cite[Appendix B]{ShWa4}).

\begin{prop}[see {\cite[Proposition 1]{st2}}] \label{tooluni} Let $R$ be an $\R$-algebra with basis $b$. Let $A(t)$ and $B(t)$ be palindromic, $b$-positive,  $b$-unimodal  polynomials in $R[t]$ with respective centers of symmetry $c_A$ and $c_B$.  Then
\begin{enumerate}
\item $A(t) B(t) $ is palindromic, $b$-positive,  $b$-unimodal  with center of symmetry $c_A +c_B$.
\item If $c_A=c_B$ then $A(t) +B(t)$ is palindromic,  $b$-positive, $b$-unimodal with center of symmetry $c_A$.
\end{enumerate}
\end{prop}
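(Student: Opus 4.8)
The plan is to reduce both parts to a single structural fact: a polynomial $A(t)\in R[t]$, regarded as having formal degree $n$ (leading and trailing zero coefficients allowed, so palindromicity reads $a_j=a_{n-j}$ for $0\le j\le n$), is palindromic with center of symmetry $n/2$, $b$-positive and $b$-unimodal if and only if
$$A(t)=\sum_{i=0}^{\lfloor n/2\rfloor} c_i\,\Theta^n_i(t),\qquad \Theta^n_i(t):=t^i+t^{i+1}+\cdots+t^{n-i},$$
with every $c_i\in R$ being $b$-positive. For the forward direction I would first check that palindromicity together with $b$-unimodality forces $a_0\le_b a_1\le_b\cdots\le_b a_{\lfloor n/2\rfloor}$: the case in which the unimodal peak index $c$ satisfies $c\ge\lfloor n/2\rfloor$ is immediate, and if $c\le\lfloor n/2\rfloor$ then for $c\le j<j'\le\lfloor n/2\rfloor$ the decreasing condition gives $a_j\ge_b a_{j'}$, while applying it to the reflected indices $n-j'<n-j$ (both $\ge\lceil n/2\rceil\ge c$) gives $a_{j'}=a_{n-j'}\ge_b a_{n-j}=a_j$, whence $a_j=a_{j'}$ by antisymmetry of $\le_b$ (immediate since $b$ is a basis). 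Then $c_i:=a_i-a_{i-1}$ (with $a_{-1}:=0$) is $b$-positive, and $\sum_i c_i\Theta^n_i=A$ by a telescoping computation using $a_j=a_{n-j}$. The reverse direction is routine: each $\Theta^n_i$ is palindromic with center $n/2$, $b$-positive and $b$-unimodal, and a $b$-positive combination of them keeps the same center, stays $b$-positive, and is $b$-unimodal because the difference of consecutive coefficients at any position below $n/2$ is a $b$-positive combination of the nonnegative integers $0$ and $1$.

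Granting the lemma, Part (2) is immediate: if $c_A=c_B$, then, setting $n:=2c_A$, both $A$ and $B$ admit $\Theta^n_i$-expansions with $b$-positive coefficients $c_i$ and $d_i$, so $A+B=\sum_i(c_i+d_i)\Theta^n_i$ does too, and the lemma applies.

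For Part (1), palindromicity of $A(t)B(t)$ with center $c_A+c_B$ follows at once from $t^{n_A}A(1/t)=A(t)$ and $t^{n_B}B(1/t)=B(t)$, where $n_A=2c_A$, $n_B=2c_B$. For $b$-positivity and $b$-unimodality I would expand $A=\sum_i c_i\Theta^{n_A}_i$, $B=\sum_j d_j\Theta^{n_B}_j$ and use the elementary identity
$$\Theta^{m}_a(t)\,\Theta^{m'}_b(t)=\sum_{r=0}^{\min(m-2a,\,m'-2b)}\Theta^{m+m'}_{a+b+r}(t),$$
proved by factoring out $t^{a+b}$ and observing that $(1+t+\cdots+t^p)(1+t+\cdots+t^{p'})$ is the convolution of two intervals, hence has a trapezoidal coefficient sequence. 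Substituting gives $A(t)B(t)=\sum_k e_k\,\Theta^{n_A+n_B}_k(t)$ with each $e_k$ a sum of products $c_id_j$; since in each of the algebras considered in the paper a product of $b$-positive elements is again $b$-positive (trivially for $R=\R$ and $R=\R[q]$; by the Littlewood--Richardson rule for the Schur basis of $\Lambda$, and obviously for the $h$-basis), each $e_k$ is $b$-positive and the lemma finishes the proof.

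I expect the main obstacle to be the bookkeeping in the lemma, specifically the partial-order subtlety that $\le_b$ is not total, so "$b$-unimodal" does not by itself place the peak at the center of symmetry; one must argue, as above, that the coefficient sequence is in fact constant from the peak to the center. The hypothesis on $R$ is genuinely used only for the closure of $b$-positivity under products in Part (1); everything else goes through for an arbitrary $\R$-algebra with a distinguished basis.
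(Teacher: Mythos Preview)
The paper does not supply its own proof of this proposition; it simply cites Stanley \cite[Proposition~1]{st2} and refers to \cite[Appendix~B]{ShWa4} for the general $R$ setting. Your argument via the ``staircase'' decomposition $A(t)=\sum_i c_i\Theta^n_i(t)$ with $c_i=a_i-a_{i-1}$ is precisely the classical one, and your handling of the partial-order subtlety (that the peak need not sit at the center, but the sequence must be constant between them) is correct. You are also right to flag that Part~(1) tacitly requires products of $b$-positive elements to be $b$-positive; the proposition as stated in the paper omits this hypothesis, but it holds in every instance the paper actually uses ($\R$, $\R[q]$, and $\Lambda$ with the Schur or $h$-basis), exactly as you note.
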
 

\begin{cor} \label{UniProp}    If $A(t) \in R[t]$ is $b$-$\gamma$-positive then $A(t)$ is palindromic, $b$-positive, and $b$-unimodal.
\end{cor}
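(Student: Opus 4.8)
The plan is to derive Corollary~\ref{UniProp} directly from the definition of $b$-$\gamma$-positivity together with the multiplicativity and additivity statement in Proposition~\ref{tooluni}. Suppose $A(t) = \sum_{k=0}^{\lfloor d/2 \rfloor} \gamma_k\, t^k(1+t)^{d-2k}$ with each $\gamma_k \in R$ being $b$-positive. The key observation is that each summand $\gamma_k\, t^k(1+t)^{d-2k}$ is itself palindromic, $b$-positive, and $b$-unimodal, and that all these summands share a common center of symmetry, namely $d/2$; then part (2) of Proposition~\ref{tooluni}, applied repeatedly, finishes the job.

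First I would check that the building block $t^k(1+t)^{d-2k}$ is palindromic with center of symmetry $d/2$: its coefficient of $t^j$ is $\binom{d-2k}{j-k}$ for $k \le j \le d-k$ and zero otherwise, and $\binom{d-2k}{j-k} = \binom{d-2k}{(d-j)-k}$, which is exactly the palindromicity condition $a_j = a_{d-j}$ with center $d/2$. It is $b$-positive (indeed $\R$-positive, hence $b$-positive for any basis $b$ since the coefficients are nonnegative integers) and unimodal in the ordinary numerical sense, hence $b$-unimodal. Next, multiplying by the $b$-positive scalar $\gamma_k$: since $\gamma_k$ is $b$-positive and each numerical coefficient $\binom{d-2k}{j-k}$ is a nonnegative integer, each coefficient $\binom{d-2k}{j-k}\gamma_k$ of $\gamma_k\, t^k(1+t)^{d-2k}$ is $b$-positive, and the chain of inequalities defining $b$-unimodality is preserved because $f \le_b g$ implies $cf \le_b cg$ for $b$-positive $c$ (as $c(g-f)$ is a product of $b$-positive elements, hence $b$-positive — this uses that $R$ is an algebra and the basis $b$ has nonnegative structure constants, which holds for the three algebras under consideration; alternatively one can simply invoke part (1) of Proposition~\ref{tooluni} with $B(t) = \gamma_k$, viewed as a constant palindromic polynomial with center $0$). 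So each $\gamma_k\, t^k(1+t)^{d-2k}$ is palindromic, $b$-positive, and $b$-unimodal with center of symmetry $d/2$.

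Finally I would sum over $k$. Since all the summands are palindromic, $b$-positive, $b$-unimodal polynomials with the same center of symmetry $d/2$, part (2) of Proposition~\ref{tooluni} says that adding any two of them yields another such polynomial with center $d/2$; by induction on the number of nonzero summands, the full sum $A(t)$ is palindromic, $b$-positive, and $b$-unimodal, which is precisely the conclusion.

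The only subtlety — and the part I would be most careful about — is the claim that multiplying a $b$-positive, $b$-unimodal polynomial by a $b$-positive scalar preserves $b$-unimodality, since $b$-positivity of a product is not automatic for an arbitrary $\R$-algebra with an arbitrary basis. For the algebras actually used in this paper ($\R$, $\R[q]$, and $\Lambda$ with the Schur or complete homogeneous basis), products of $b$-positive elements are $b$-positive (this is classical for Schur and $h$ expansions, e.g.\ via the Littlewood--Richardson rule), so the argument goes through; and in any case the cleanest route is to treat the scalar $\gamma_k$ as the palindromic polynomial $\gamma_k \in R[t]$ of degree $0$ and center $0$, and apply part (1) of Proposition~\ref{tooluni} to $\gamma_k$ and $t^k(1+t)^{d-2k}$, so that no fact beyond Proposition~\ref{tooluni} itself is needed.
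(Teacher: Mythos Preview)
Your proposal is correct and takes essentially the same approach as the paper: the paper states Corollary~\ref{UniProp} without proof, as an immediate consequence of Proposition~\ref{tooluni}, and your argument simply spells out that deduction, treating each $\gamma_k\, t^k(1+t)^{d-2k}$ as a palindromic, $b$-positive, $b$-unimodal polynomial with center $d/2$ and summing via part~(2). Your handling of the scalar $\gamma_k$ via part~(1) of Proposition~\ref{tooluni} is the cleanest way to avoid any side assumption on structure constants.
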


\section{Schur-$\gamma$-positivity} \label{SchurSec}

  In this section we establish Schur-$\gamma$-positivity of  the symmetric function analogs $Q_n(\x,t)$ and $\tilde Q_n(\x,t)$  given in (\ref{defQ}) and (\ref{defBQ}), and we present  combinatorial formulae for the $\gamma$-coefficients.
We also present direct proofs of palindromicity, Schur-positivity, and Schur-unimodality, which don't rely on Schur-$\gamma$-positivity.

 It is an easy consequence of the following  result of Gessel that $Q_n(\x,t)$ is Schur-$\gamma$-positive. Let $\pp_n$ be the set of words of length $n$ over the alphabet of positive integers $\pp$.
Given a word $w \in \pp_n$, we let $w_i$ denote its $i$th letter.  That is, $w = w_1w_2 \dots w_n$.  Just as for permutations, let $\des(w)$ equal the number of $i \in [n-1]$ such that $w_i > w_{i+1}$. A word $w$ is said to have a double descent if there exists an $i \in [n-2]$ such that $w_i>w_{i+1}>w_{i+2}$. Let $NDD_n$ be the set of words in $\pp_n$ with no double descents.  For $w \in \pp_n$, let $\x_w:=x_{w_1}x_{w_2}\dots x_{w_n}$.

 \begin{thm}[Gessel \cite{Ge}, see {\cite[Theorem 7.3]{ShWa2}}] \begin{equation} \label{ges} 1+\sum_{n \ge 1} z^n \sum_{\scriptsize \begin{array}{c} w \in NDD_n \\ w_{n-1} \le w_n\end{array}} \,{\bf x}_w \, t^{\des(w)} (1+t) ^{n-1-2\des(w)} = {(1-t) H(z) \over H(zt) - t H(z)} ,\end{equation}
 where  $w_0=0$.
\end{thm}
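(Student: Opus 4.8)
By \eqref{defQ}, the right-hand side of \eqref{ges} is precisely $\sum_{n\ge 0}Q_n(\x,t)\,z^n$. Write $G_n=G_n(\x,t)$ for the coefficient of $z^n$ on the left-hand side, so $G_0=1$ and, for $n\ge 1$,
$$G_n=\sum_{\substack{w\in NDD_n\\ w_{n-1}\le w_n}}\x_w\,t^{\des(w)}\,(1+t)^{\,n-1-2\des(w)}.$$
The theorem is then the assertion $G_n=Q_n(\x,t)$ for all $n$. The plan is to show that $\sum_n G_nz^n$ satisfies the functional equation that defines $\sum_n Q_nz^n$. Since $H(zt)-tH(z)=(1-t)+\sum_{m\ge 2}h_m(t^m-t)z^m$, extracting the coefficient of $z^n$ from $\bigl(\sum_nQ_nz^n\bigr)\bigl(H(zt)-tH(z)\bigr)=(1-t)H(z)$ gives, for $n\ge 1$, the recursion
$$Q_n=h_n+\sum_{\ell=2}^{n}h_\ell\,\bigl(t+t^2+\dots+t^{\ell-1}\bigr)\,Q_{n-\ell},\qquad Q_0=1.$$
This determines the sequence uniquely, so it suffices to prove that the $G_n$ obey the same recursion.

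Thus everything reduces to the symmetric-function identity $G_n=h_n+\sum_{\ell=2}^{n}h_\ell(t+t^2+\dots+t^{\ell-1})G_{n-\ell}$ for $n\ge 1$, which I would prove combinatorially. The key device is to expand $(1+t)^{\,n-1-2\des(w)}$ as $\sum_{T}t^{|T|}$ over subsets $T$ of the set of ``free'' letters of $w$ (those that are neither peaks nor valleys, whose number is $n-1-2\des(w)$ once boundary conventions are fixed — the stipulation $w_0=0$ serves this purpose at the left end), so that $G_n=\sum_{(w,T)}\x_w\,t^{\des(w)+|T|}$. One then wants an $\x$- and $t$-weight preserving bijection between such pairs $(w,T)$ and the objects enumerated by the right-hand side of the recursion: either a single weakly increasing word of length $n$ (the $h_n$ summand, $t$-exponent $0$), or a pair consisting of a weakly increasing word $\beta$ of length $\ell\ge 2$ with a marked cut point $j\in\{1,\dots,\ell-1\}$ (contributing $t^j$) together with an object of the same kind and total length $n-\ell$. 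The bijection is the valley-hopping (modified Foata--Sch\"utzenberger) map: one peels off the terminal portion of $w$ around its last peak, uses the subset $T$ to decide on which side of its surrounding ``arch'' each selected free letter is placed, and reassembles this terminal portion and its relocated free letters into a weakly increasing block $\beta$; the descent that was carried becomes the cut point, and what remains is a shorter word of the same type, to which one recurses.

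The step I expect to be the main obstacle is handling equal adjacent letters, since we are working with arbitrary words rather than permutations: when ties occur, a letter need not be unambiguously a peak, valley, double ascent or double descent, and the valley-hopping move can degenerate. (This is exactly why the ``naive'' identity $\sum_{w\in\pp_n}\x_w t^{\des(w)}=G_n$ fails already at $n=2$, where the left side is $h_2+t\,e_2$ and the right side $h_2+t\,h_2$.) The remedy I would use is to break ties by lifting to a refined linearly ordered alphabet in which each maximal run of equal letters is made strictly increasing, run the honest tie-free Foata--Sch\"utzenberger involution there, and project back; one must verify that the projection is compatible with $\des$, with the free-letter count $n-1-2\des$, and with the constraints ``no double descent'' and $w_{n-1}\le w_n$, and in particular that the coupling at a descent — the last letter of the shorter word versus the first letter of $\beta$ — is correctly decoupled by the arch-straightening. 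An alternative that avoids the lift is to prove the recursion directly by constructing a sign-reversing involution realizing the signed identity $\sum_{m=0}^{n}G_m\,h_{n-m}(t^{n-m}-t)=(1-t)h_n$; I would expect this to be of comparable difficulty.
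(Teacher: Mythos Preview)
The paper does not actually prove this theorem. It is attributed to Gessel (personal communication, reference [Ge]) and cited via \cite[Theorem 7.3]{ShWa2}; the authors use it as input to Corollary~\ref{gescor} and Theorem~\ref{gesBcor}. The only hint at a proof in the paper is the remark after Theorem~\ref{gesBcor}: one can deduce (\ref{ges}) from Gessel's published \cite[Theorem 4.2]{Ge1} by first proving Theorem~\ref{gesBcor} from that identity and then \emph{inverting} the relation (\ref{tildeWeq}) to pass from $\tilde W_n=\tilde Q_n$ back to $W_n=Q_n$. That route is quite different from yours.

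Your plan is sound in outline. The recursion you extract,
\[
Q_n=h_n+\sum_{\ell=2}^{n}h_\ell\,t[\ell-1]_t\,Q_{n-\ell},
\]
is exactly the identity the paper quotes as (\ref{qeq}) from \cite[Corollary~4.1]{ShWa2}, so the reduction to ``show $G_n$ satisfies the same recursion'' is on firm ground. The gap is that the bijective step is not carried out; it is a sketch with an honest flag on the hard part. Two concrete issues to watch if you pursue it: (i) the ``peel off the terminal arch'' move must produce a \emph{weakly increasing} block of length $\ell\ge 2$ together with a cut in $\{1,\dots,\ell-1\}$, but after you slide free letters across the peak using $T$, the resulting block need not be weakly increasing unless you are careful about which free letters on the final ascending run are eligible to cross the peak and where they land relative to equal letters already there; (ii) the residual word must again satisfy the boundary condition $w_{n'-1}\le w_{n'}$, and peeling at the last peak can expose a new final descent unless the last valley is retained on the residual side. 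Your tie-breaking lift is the standard fix, but you must check that it is compatible with \emph{both} boundary constraints simultaneously, not just with $\des$ and the free-letter count. Until those checks are written out, this is a plausible strategy rather than a proof.
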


The symmetric function polynomial $Q_n(\x,t)$  defined in (\ref{defQ}) can now be given an explicit expansion  which establishes Schur-$\gamma$-positivity.  The $\gamma$-coefficients are described in terms of  hook shaped skew Schur functions. A {\it skew hook} is a connected skew diagram with no $2 \times 2$ square.  Let $\mathcal H_{n,k}$ be  the set of skew hooks of size $n$ for  which $k$ columns have size 2 and  the remaining $n-2k$ columns, including the last column, have size 1. For example, 
$$ {\tableau{&& && & {} & {} \\ && {}&{} & {} & {} \\ {} & {} & {} }} \quad  \in \quad \mathcal H_{9,2}.$$

\begin{cor} \label{gescor}  Let 
\begin{equation} \label{symAEcoef}\gamma_{n,k}(\x) := \sum_{D \in  \mathcal H_{n,k}} s_D(\x) ,\end{equation} 
where $s_D(\x)$ is the skew Schur function of shape $D$.
Then \begin{equation} \label{symFSeq}Q_n(\x,t) = \sum_{k=0}^{\lfloor \frac {n-1} 2 \rfloor} \gamma_{n,k}(\x)  \, t^k (1+t)^{n-1-2k}. \end{equation}  
Consequently the polynomial  $Q_n(\x,t)$ is Schur-$\gamma$-positive. 
\end{cor}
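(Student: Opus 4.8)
The plan is to combine Gessel's identity (\ref{ges}) with the defining generating function (\ref{defQ}) to get a word-theoretic expansion of $Q_n(\x,t)$, and then recognize the inner sum as a sum of skew Schur functions indexed by skew hooks. First I would observe that, by matching coefficients of $z^n$ in (\ref{ges}) and (\ref{defQ}), we obtain for every $n \ge 1$ the identity
\begin{equation*}
Q_n(\x,t) = \sum_{\substack{w \in NDD_n \\ w_{n-1} \le w_n}} \x_w \, t^{\des(w)} (1+t)^{n-1-2\des(w)},
\end{equation*}
with the convention $w_0 = 0$ (so that a descent at position $0$ is impossible, i.e.\ there is no constraint from $w_0$ beyond what is written). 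Grouping the words $w$ by their descent number $\des(w) = k$, this reads $Q_n(\x,t) = \sum_{k \ge 0} \big(\sum_{w} \x_w\big)\, t^k(1+t)^{n-1-2k}$, where the inner sum is over $w \in NDD_n$ with $w_{n-1} \le w_n$ and $\des(w) = k$. Since each such word has $n-1-k \ge k$, only $k \le \lfloor (n-1)/2\rfloor$ contributes, matching the range in (\ref{symFSeq}). So it remains to identify $\sum_w \x_w = \gamma_{n,k}(\x) = \sum_{D \in \mathcal H_{n,k}} s_D(\x)$.

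Next I would set up the bijection between descent classes of no-double-descent words and skew hooks. Recall that for a skew hook (ribbon) $D$ of size $n$, the skew Schur function $s_D(\x)$ is the generating function $\sum_T \x^T$ over semistandard fillings $T$ of $D$; reading the entries of a ribbon along its single path gives a word $w(T)$ of length $n$, and the semistandard condition translates exactly into: entries weakly increase along each row (a horizontal step of the ribbon) and strictly increase up each column (a vertical step). Equivalently, reading the ribbon in the order that traverses it, $w(T)$ has a strict descent precisely at the positions where the ribbon goes vertically and a weak ascent at the positions where it goes horizontally. Thus a ribbon $D$ of size $n$ corresponds to a subset $S \subseteq [n-1]$ (its "descent set"), and $\sum_{T} \x^{w(T)}$ ranges over all words $w \in \pp_n$ whose strict-descent set is *contained in* $S$ and whose positions outside $S$ are weak ascents — but actually the classical statement is that $s_D$ sums over words with descent set contained in $\mathrm{Des}(D)$; to get words with descent set exactly equal to a prescribed set one uses inclusion–exclusion, which is the wrong grouping here. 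Instead I would use the *fundamental quasisymmetric* description more carefully: the set of words $w$ with $\des(w)$ precisely a given set $S$ has generating function $\sum_{D} s_D$ summed appropriately — but the cleaner route, and the one I would actually take, is:

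Define $\mathcal H_{n,k}$ as in the statement and set up a direct bijection between $\mathcal H_{n,k}$ and the collection of "descent patterns" of words $w \in NDD_n$ with $w_{n-1}\le w_n$ and $\des(w)=k$. A no-double-descent word of length $n$ with descent set $S \subseteq [n-1]$ is precisely a word whose descent set contains no two consecutive integers; imposing additionally $w_{n-1} \le w_n$ means $n-1 \notin S$. Reading such $S$ left to right, it encodes a ribbon in which the columns of size $2$ are exactly those corresponding to elements of $S$ (a descent $w_i > w_{i+1}$ forces entries $i$ and $i+1$ into the same column, of height $2$), the no-double-descent condition says no two size-$2$ columns are "adjacent" in the forced sense (equivalently the ribbon has no $2\times 2$ square — automatic for a ribbon, so actually the constraint is that $S$ has no two consecutive elements, which is exactly the shape condition that $k$ columns have size $2$ and they don't run together), and $n-1 \notin S$ says the last column has size $1$. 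Under this correspondence $|S| = k$ gives exactly $k$ columns of size $2$ and $n-2k$ columns of size $1$ including the last, i.e.\ $D \in \mathcal H_{n,k}$. For a fixed such ribbon $D$, summing $\x_w$ over all SSYT-fillings read along the ribbon gives $s_D(\x)$, and as $D$ ranges over $\mathcal H_{n,k}$ the words $w$ range over exactly $\{w \in NDD_n : w_{n-1}\le w_n,\ \des(w)=k\}$ with no overlaps. Hence $\sum_w \x_w = \sum_{D \in \mathcal H_{n,k}} s_D(\x) = \gamma_{n,k}(\x)$, which proves (\ref{symFSeq}). Finally, Schur-$\gamma$-positivity is immediate: each $\gamma_{n,k}(\x)$ is a sum of (skew) Schur functions, hence Schur-positive, since a skew Schur function expands nonnegatively in the Schur basis by the Littlewood–Richardson rule.

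The main obstacle is the bookkeeping in the middle step: carefully matching the three combinatorial conditions on words (no double descent, $w_{n-1}\le w_n$, $\des(w)=k$) with the three conditions defining $\mathcal H_{n,k}$ (it's a ribbon, exactly $k$ columns of height $2$, the last column has height $1$), and verifying that the "read along the ribbon" map is a weight-preserving bijection between SSYT of shape $D$ and words of the prescribed descent type. I expect the no-double-descent condition to be the subtle one: one must check it corresponds precisely to the ribbon never having two height-$2$ columns forced to share a row in a way that would create a $2\times 2$ block — equivalently that the descent set contains no consecutive pair — and that this is genuinely the same family that appears when one expands the ribbon Schur functions over the index set $\mathcal H_{n,k}$. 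Everything else (extracting coefficients of $z^n$, regrouping by $\des$, the final positivity remark) is routine.
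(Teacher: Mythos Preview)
Your approach is essentially the same as the paper's: extract the coefficient of $z^n$ from Gessel's identity, group words by $\des(w)=k$, and identify the resulting inner sum with $\sum_{D\in\mathcal H_{n,k}} s_D(\x)$ via the bijection that reads a semistandard ribbon filling from southwest to northeast. One small correction to your exploratory aside: for a ribbon $D$, the SSYT fillings read along the ribbon give exactly the words with descent set \emph{equal} to (not merely contained in) the vertical-step positions of $D$, so the detour through inclusion--exclusion and quasisymmetric functions is unnecessary --- your final direct argument is the right one and matches the paper.
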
 

\begin{proof}  By (\ref{ges}), for $n \ge 1$,
$$ Q_n(\x,t) = \sum_{\scriptsize \begin{array}{c} w \in NDD_n \\ w_{n-1} \le w_n\end{array}}  \,{\bf x}_w \, t^{\des(w)} (1+t) ^{n-1-2\des(w)}.$$  Note that the semistandard tableaux of hook shape in $\mathcal H_{n,k}$ correspond bijectively to words $w \in NDD_n$ with $w_{n-1} \le w_n$ and with $k$ descents.  Indeed by reading the entries of such a semistandard tableau from southwest to northeast, one gets such a word.  
For example, the semistandard tableau
$$ {\tableau{&& && & {2} & {8} \\ && {1}&{1} & {8} & {9} \\ {2} & {5} & {5} }} $$
corresponds to the word $255118928 \in NDD_9$, which has $2$ descents.
It follows that
$$\sum_{\scriptsize \begin{array}{c} w \in NDD_n \\ w_{n-1} \le w_n\\ \des(w) =k \end{array}}  \x_w = \sum_{D \in \mathcal H_{n,k}} s_D(\x).$$


The consequence follows from the fact that skew Schur functions are Schur-positive.
\end{proof}

Next we derive an analogous Schur-$\gamma$-positivity result for    $\tilde Q_{n}(\x,t)$, which was defined in (\ref{defBQ}).
We begin with a generating function formula.

\begin{prop} \label{newbinomprop}
\begin{equation} \label{genfuntildeQ} \sum_{n \ge 0} \tilde Q_n(\x,t) z^n =  {(1-t) H(z)H(tz) \over H(tz) - t H(z)}.\end{equation}
Equivalently, for all $n \ge 0$,
\begin{equation} \label{newbinomeq} \tilde Q_{n}(\x,t) =   \sum_{m=0}^n  h_{n-m}(\x) Q_m(\x,t) t^{n-m} .\end{equation}
\end{prop}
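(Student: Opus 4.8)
The statement has two parts, and the second ((\ref{newbinomeq})) is just the coefficient-of-$z^n$ form of the generating function identity (\ref{genfuntildeQ}), so I would prove (\ref{genfuntildeQ}) and then read off (\ref{newbinomeq}) by expanding $H(tz) = \sum_{k\ge 0} h_k(\x) t^k z^k$ and multiplying the two power series $H(tz)$ and $\frac{(1-t)H(z)}{H(tz)-tH(z)} = \sum_{m\ge 0} Q_m(\x,t) z^m$ (the latter by the defining identity (\ref{defQ})). Concretely, the coefficient of $z^n$ in $H(tz)\cdot\sum_m Q_m(\x,t)z^m$ is $\sum_{m=0}^n h_{n-m}(\x) t^{n-m} Q_m(\x,t)$, which is exactly (\ref{newbinomeq}). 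So the content is all in (\ref{genfuntildeQ}).

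\textbf{Proving the generating function identity.} Start from the definition (\ref{defBQ}): $\tilde Q_n(\x,t) = h_n(\x) + t\sum_{m=1}^n h_{n-m}(\x) Q_m(\x,t)$. Note that since $Q_0(\x,t)=1$ (constant term in (\ref{defQ})), the $m=0$ term of $t\sum_{m=0}^n h_{n-m}(\x)Q_m(\x,t)$ is $t\,h_n(\x)$, so I cannot simply fold $h_n(\x)$ into the sum; instead I would write $\tilde Q_n(\x,t) = h_n(\x)(1-t) + t\sum_{m=0}^n h_{n-m}(\x)Q_m(\x,t)$ for $n\ge 1$, and check this also holds at $n=0$ (both sides equal $1$, using $Q_0=1$: RHS $= (1-t) + t\cdot 1 = 1$). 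Now sum against $z^n$:
\[
\sum_{n\ge 0}\tilde Q_n(\x,t)z^n = (1-t)H(z) + t\,H(z)\cdot\sum_{m\ge 0}Q_m(\x,t)z^m,
\]
where the convolution $\sum_n z^n \sum_{m=0}^n h_{n-m}(\x)Q_m(\x,t) = H(z)\sum_m Q_m(\x,t)z^m$ is just the Cauchy product. By (\ref{defQ}), $\sum_m Q_m(\x,t)z^m = \frac{(1-t)H(z)}{H(tz)-tH(z)}$, so
\[
\sum_{n\ge 0}\tilde Q_n(\x,t)z^n = (1-t)H(z) + \frac{t(1-t)H(z)^2}{H(tz)-tH(z)} = (1-t)H(z)\cdot\frac{H(tz)-tH(z)+tH(z)}{H(tz)-tH(z)} = \frac{(1-t)H(z)H(tz)}{H(tz)-tH(z)},
\]
which is (\ref{genfuntildeQ}). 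Then extracting the coefficient of $z^n$ from $\frac{(1-t)H(z)H(tz)}{H(tz)-tH(z)} = H(tz)\cdot\sum_m Q_m(\x,t)z^m$ gives (\ref{newbinomeq}).

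\textbf{Main obstacle.} There is no real obstacle here — this is a formal power series manipulation. The only thing to be careful about is the bookkeeping at $n=0$ and the handling of the $h_n(\x)$ term versus the $m=0$ term of the sum, since naively the definition (\ref{defBQ}) separates out $h_n(\x)$ while the cleaner generating-function form wants it absorbed; reconciling the two via the $(1-t)$ factor is the one place a sign/term could be dropped. Everything else is the Cauchy product and substituting the definition (\ref{defQ}).
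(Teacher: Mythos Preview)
Your proof is correct and follows essentially the same approach as the paper: both sum the definition (\ref{defBQ}) against $z^n$, recognize the convolution as $H(z)\sum_m Q_m(\x,t)z^m$, substitute (\ref{defQ}), and simplify. The only cosmetic difference is that you first absorb the $m=0$ term into the sum at the cost of a $(1-t)h_n$ correction, whereas the paper keeps the sum starting at $m=1$ and writes $\sum_{m\ge 1}Q_m z^m = \frac{(1-t)H(z)}{H(tz)-tH(z)}-1$; the resulting algebra is equivalent.
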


\begin{proof} By the definitions (\ref{defBQ}) and (\ref{defQ}),
\begin{eqnarray*} \sum_{n \ge 0} \tilde Q_{n}(\x,t) z^n  &=& H(z)(1+t\sum_{n\ge 1} Q_n(\x,t) z^n) \\ &=& H(z) (1+ t( {(1-t) H(z)  \over H(tz) -tH(z)} -1)  )
\\ 
&=& H(z) \frac{H(tz)(1-t) }{H(tz)-tH(z)}.
\end{eqnarray*}
\end{proof}

Let $\tilde{\mathcal H}_{n,k}$ be  the set of skew hooks of size $n$ for  which $k$ columns have size 2 and the remaining $n-2k$ columns have size 1.

\begin{thm} \label{gesBcor}   Let
\begin{equation} \label{symBEcoef} \tilde \gamma_{n,k}(\x) := \sum_{D \in  \tilde {\mathcal H}_{n,k}} s_D(\x) ,\end{equation}
where $s_D(\x)$ is the skew Schur function of shape $D$.
Then \begin{equation} \label{BEgammaeq} \tilde Q_n(\x,t) = \sum_{k=0}^{\lfloor \frac {n} 2 \rfloor} \tilde \gamma_{n,k}(\x) \, t^k (1+t)^{n-2k}. \end{equation}  
Consequently the polynomial $\tilde Q_n(\x,t)$ is Schur-$\gamma$-positive.
\end{thm}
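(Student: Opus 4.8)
The plan is to derive (\ref{BEgammaeq}) from the already-established Schur-$\gamma$-expansion (\ref{symFSeq}) of $Q_m(\x,t)$ together with the generating-function identity (\ref{newbinomeq}) of Proposition~\ref{newbinomprop}. First I would substitute (\ref{symFSeq}) into (\ref{newbinomeq}), writing
\begin{equation*}
\tilde Q_n(\x,t) = h_n(\x) + \sum_{m=1}^n h_{n-m}(\x)\, t^{n-m} \sum_{k=0}^{\lfloor\frac{m-1}{2}\rfloor} \gamma_{m,k}(\x)\, t^k (1+t)^{m-1-2k}.
\end{equation*}
The goal is to rearrange the right-hand side into the form $\sum_k \tilde\gamma_{n,k}(\x)\, t^k (1+t)^{n-2k}$ with $\tilde\gamma_{n,k}(\x) = \sum_{D \in \tilde{\mathcal H}_{n,k}} s_D(\x)$. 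The combinatorial content is a decomposition of skew hooks in $\tilde{\mathcal H}_{n,k}$: every skew hook $D$ of size $n$ with $k$ columns of size $2$ either has its last column of size $1$ — in which case $D \in \mathcal H_{n,k}$ — or has its last column of size $2$, in which case removing that last column (together with the cell of the next-to-last column that shares its row) leaves a skew hook counted in an earlier term; more usefully, one peels off a maximal terminal "staircase" segment.

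The cleaner route, and the one I would actually carry out, is to prove the identity purely at the level of symmetric functions by matching the contribution of each skew hook. Fix $D \in \tilde{\mathcal H}_{n,k}$. Reading columns from right to left, let $j \ge 0$ be the length of the initial run of size-$1$ columns at the tail of $D$ (so if the last column has size $2$ then $j=0$). I claim that stripping off those $j$ trailing size-$1$ columns — which contributes a factor $h_j(\x)\, t^{?}$ is not quite right either; rather, the correct bookkeeping is: a skew hook in $\tilde{\mathcal H}_{n,k}$ decomposes uniquely as a (possibly empty) horizontal strip of size $n-m$ followed by a skew hook $D' \in \mathcal H_{m,k}$ whose last column has size $1$, where the horizontal strip is precisely the maximal terminal segment of size-$1$ columns that are "detached" at the bottom. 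Summing $s_{D}(\x)$ over all such $D$ and using that $\sum$ over horizontal strips of size $\ell$ of $s_{\text{strip}}(\x)$ equals $h_\ell(\x)$ — combined with the fact that $t^k(1+t)^{n-2k} = t^{n-m}\cdot t^k (1+t)^{(m-1-2k)+1}$ does not match dimensions — forces me to instead use the binomial expansion of $(1+t)^{m-1-2k}$ to absorb the discrepancy. So the genuinely careful step is to expand $(1+t)^{m-1-2k} = \sum_i \binom{m-1-2k}{i} t^i$ on the right of the displayed equation, collect the coefficient of $t^\ell (1+t)^{n-2\ell}$ on both sides, and verify the resulting identity of Schur-positive symmetric functions by exhibiting the column-stripping bijection between the indexing skew hooks.

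Concretely the key steps in order: (i) insert (\ref{symFSeq}) into (\ref{newbinomeq}); (ii) set up the bijection $D \leftrightarrow (\text{horizontal strip}, D')$ with $D' \in \mathcal H_{m,k}$, accounting for the $h_n(\x)$ term as the degenerate case $m=0$; (iii) check the exponent bookkeeping $t^{n-m}\cdot t^k(1+t)^{m-1-2k}$ reorganizes correctly into $\sum_\ell t^\ell (1+t)^{n-2\ell}$ after the binomial expansion, equivalently prove the scalar identity $\sum_{m} \binom{m-1-2k}{\ell-k-(n-m)} = \binom{n-2k}{\ell-k}$ by Vandermonde; (iv) conclude Schur-positivity from skew Schur functions being Schur-positive. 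The main obstacle is step (iii): getting the two different "degrees of $(1+t)$" (namely $n-1-2k$ inside $Q_m$ versus $n-2k$ in the target) to reconcile, since $\tilde Q_n$ has one more factor of degree than $Q_n$ and the extra $h_{n-m}t^{n-m}$ must supply exactly that missing degree — I expect this to reduce to a short Vandermonde/Chu identity, but verifying that the combinatorial bijection on skew hooks is compatible with it is where the care is needed. An alternative to the bijective argument that I would fall back on if the bijection gets unwieldy is to verify (\ref{BEgammaeq}) directly by plugging its right-hand side into the generating function (\ref{genfuntildeQ}) via (\ref{ges}) and matching, but I expect the column-stripping bijection to be the most transparent.
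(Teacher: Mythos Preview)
Your starting ingredients match the paper's: plug the $\gamma$-expansion (\ref{symFSeq}) of $Q_m$ into the convolution (\ref{newbinomeq}). (Minor slip: the $m=0$ term of (\ref{newbinomeq}) is $h_n(\x)\,t^n$, not $h_n(\x)$.) The gap is in steps (ii)--(iii). No ``column-stripping bijection'' on skew hooks does what you want: the product $h_{n-m}(\x)\,s_{D'}(\x)$ is not a single skew-hook Schur function (Pieri's rule scatters it), so the correspondence cannot live at the level of shapes. Worse, the $\gamma$-index $k$ is not preserved under concatenation with a weakly increasing tail: if $w$ has $k$ descents and $u$ is weakly increasing, then $w\cdot u$ has either $k$ or $k+1$ descents depending on whether a descent is created at the seam, and a single word $v\in NDD_n$ arises from several pairs $(w,u)$ with two different values of $k$. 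Your scalar identity therefore cannot simply be paired with a shape bijection, because the symmetric-function weights attached to the terms $t^{n-m+k}(1+t)^{m-1-2k}$ do not line up with any single $\tilde\gamma_{n,\ell}(\x)$.

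The paper carries out the same plan at the monomial level rather than the Schur level. It expands both sides in words and, for each fixed $v\in NDD_n$, identifies the admissible splittings $v=w\cdot u$ (with $w\in NDD_m$, $w_{m-1}\le w_m$, $u$ weakly increasing) as exactly $m\in\{j-1\}\cup\{j+1,\dots,n\}$, where $j-1$ is the position of the last descent of $v$; the exceptional value $m=j-1$ is precisely the one splitting that loses the descent at the seam. Summing the contributions $t^{\des(w)+(n-m)}(1+t)^{m-1-2\des(w)}$ over these $m$ is a geometric series that telescopes to $t^{\des(v)}(1+t)^{n-2\des(v)}$; this telescoping, not a Vandermonde collapse, is what reconciles the exponents $m-1-2k$ and $n-2k$. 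Your stated fallback of verifying via (\ref{ges}) and the generating function is in spirit exactly this and would succeed if developed; the skew-hook/Vandermonde route, as written, does not.
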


\begin{proof}

For $n \ge 1$, let $$W_n(\x,t):=   \sum_{\scriptsize \begin{array}{c} w \in NDD_n \\ w_{n-1} \le w_n\end{array}} \,{\bf x}_w \, t^{\des(w)} (1+t) ^{n-1-2\des(w)}$$ and let 
 $$\tilde W_n(\x,t):=   \sum_{ w \in NDD_n } \,{\bf x}_w \, t^{\des(w)} (1+t) ^{n-2\des(w)}.$$
 
By (\ref{ges}), we have 
\begin{equation} \label{WQeq} W_n(\x,t) = Q_n(\x,t).\end{equation}  
We will show that
 \begin{equation} \label{tildeWeq} \tilde W_n(\x,t) = h_n(\x) t^n+  \sum_{m =1}^n h_{n-m}(\x)W_m(\x,t)  t^{n-m}.\end{equation}
It follows from this, (\ref{newbinomeq}), and (\ref{WQeq})  that    $\tilde W_n(\x,t) = \tilde Q_n(\x,t)$.   
This is equivalent to the desired result since the semistandard  tableaux of skew hook shape in $\tilde{\mathcal H}_{n,k}$ correspond bijectively to words  in $NDD_n$ with $k$ descents.

Let $I_n $ be the set $\{\alpha \in \PP_n: \alpha_1 \le \dots \le \alpha_n\}$ of weakly increasing words of length $n$.  The right side of (\ref{tildeWeq}) equals
\begin{eqnarray*} & &\sum_{u\in I_n}t^{n} \x_u + \sum_{m=1}^n \sum_{\scriptsize \begin{array}{c} w \in NDD_m \\ w_{m-1} \le w_m\end{array}}  t^{\des(w)} (1+t)^{m-1-2\des(w)} \x_{w}
\sum_{u \in I_{n-m}}t^{n-m} \x_u  \\
&=& \sum_{u \in I_n}t^{n}\x_u +
 \sum_{m=1}^n \ \sum_{\scriptsize \begin{array}{c} w \in NDD_m \\ w_{m-1} \le w_m \\ u\in I_{n-m}\end{array}}  t^{\des(w)+(n-m)} (1+t)^{m-1-2\des(w)} \x_{w\cdot u} 
\end{eqnarray*}
where $w \cdot u$ denotes concatenation of words $w$ and $u$.

For $v \in \PP_n$ we seek the coefficient of $\x_v$.  Note that the coefficient is 0 if $v$ has a double descent.  For $v \in NDD_n$, let $j$ be the smallest integer such that $v_j \le v_{j+1} \le \dots \le v_n$.  So $j-1$ is either $0$ (when $v$ is weakly increasing) or the position of the last descent.  Each $m \in \{j+1, \dots, n\}$ determines a  decomposition of $v$ into $w \cdot u$, where $w \in NDD_m$, $w_{m-1 } \le w_m$  and  $u \in I_{n-m}$.   Note that $\des(v) = \des(w)$.

The only other value of $m$ that determines a decomposition of $v$ into $w \cdot u$ for which $w \in NDD_m$, $w_{m-1 } \le w_m$  and  $u \in I_{n-m}$, is $m = j-1$.  In this case, if $j-1 >0$ we have $\des(v) = \des(w) +1$.
It follows that if $j > 1$, the coefficient $c_v$ of $\x_v$  is given by
$$c_v =  t^{\des(v)+n-j}  (1+t)^{j-2\des(v)} + \sum_{m=j+1}^n t^{\des(v)+n-m} (1+t)^{m-1-2\des(v)}.$$
We have \begin{eqnarray}\label{comput} \nonumber \sum_{m=j+1}^n t^{\des(v)+n-m} (1+t)^{m-1-2\des(v)}
 &=& t^{\des(v)+n-j-1}  (1+t)^{j-2\des(v)} \sum_{k=0}^{n-j-1} \left(\frac{1+t}{t}\right)^{k}
 \\ \nonumber &=& t^{\des(v)+n-j}  (1+t)^{j-2\des(v)} \left ( \left ( \frac{1+t} t \right )^{n-j} -1\right)
 \\ &=& t^{\des(v)} (1+t)^{n-2\des(v)} - t^{\des(v)+n-j}  (1+t)^{j-2\des(v)},
\end{eqnarray}  
from which we conclude that $c_v = t^{\des(v)} (1+t)^{n-2\des(v)}$.

Now if $j=1$ then $v$ is a weakly increasing word and  the coefficient of $\x_v$ is given by
$$c_v =  t^n + \sum_{m=1}^n t^{n-m} (1+t)^{m-1} .$$  A simple computation shows that the summation is equal to 
$(1+t)^n - t^n$.  Hence $c_v = (1+t)^n = t^{\des(v)}(1+t)^{n-2\des(v)}$, as in the previous case.
We have therefore shown that the right hand side of (\ref{tildeWeq}) is equal to $$ \sum_{v \in NDD_n} t^{\des(v)}(1+t)^{n-2\des(v)} \x_v,$$ which by  definition is the left side of (\ref{tildeWeq}).   \end{proof}

\begin{remark}  It was pointed out to us by Gonz\'alez D'Le\'on that another identity of Gessel  \cite[Theorem 4.2]{Ge1} can be used to give an alternative proof of Theorem~\ref{gesBcor}, or equivalently of $\tilde Q_n(\x,t) = \tilde W_n(\x,t)$.   By inverting (\ref{tildeWeq}), one can conclude from this that $ Q_n(\x,t) = W_n(\x,t)$, which is equivalent to Gessel's unpublished result  (\ref{ges}).  Hence \cite[Theorem 4.2]{Ge1} can be used to prove (\ref{ges}).  Gessel \cite{Ge} has  a more direct proof of (\ref{ges}) however.
\end{remark}

The following result for $Q_n(\x,t)$ was first obtained by Stanley \cite{st2} from the algebro-geometric interpretation of $Q_n(\x,t)$ given in (\ref{StanProEq}).  

\begin{cor} \label{SchurUniCor} For all $n \ge 0$, the symmetric function polynomials $Q_n(\x,t)$ and $\tilde Q_n(\x,t)$ are palindromic, Schur-positive, and Schur-unimodal.
\end{cor}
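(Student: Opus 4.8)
We give two proofs; the first is the natural one given what has just been established, and the second is a proof that does not pass through $\gamma$-positivity.

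\smallskip
\noindent\emph{First approach.} By Corollary~\ref{gescor} the symmetric function $Q_n(\x,t)$ is Schur-$\gamma$-positive, and by Theorem~\ref{gesBcor} so is $\tilde Q_n(\x,t)$. Apply Corollary~\ref{UniProp} with $R=\Lambda$ and $b$ the basis of Schur functions: a Schur-$\gamma$-positive polynomial is palindromic, Schur-positive, and Schur-unimodal. The forms (\ref{symFSeq}) and (\ref{BEgammaeq}) moreover exhibit the centers of symmetry as $\frac{n-1}{2}$ for $Q_n(\x,t)$ and $\frac n2$ for $\tilde Q_n(\x,t)$. This already proves the corollary.

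\smallskip
\noindent\emph{Second approach (independent of $\gamma$-positivity).} The plan is to prove the statement by induction on $n$ using Proposition~\ref{tooluni}, after extracting from the generating functions (\ref{defQ}) and (\ref{genfuntildeQ}) recurrences whose summands are each palindromic, Schur-positive, and Schur-unimodal with a \emph{common} center of symmetry. Clearing the denominator in (\ref{defQ}) and comparing coefficients of $z^n$ gives, for $n\ge 2$,
\begin{equation*}
Q_n(\x,t)=h_n(\x)\,(1+t+\cdots+t^{\,n-1})+\sum_{m=2}^{n-1}h_m(\x)\,(t+t^2+\cdots+t^{\,m-1})\,Q_{n-m}(\x,t),
\end{equation*}
with $Q_0(\x,t)=1$, $Q_1(\x,t)=h_1(\x)$; and clearing the denominator in (\ref{genfuntildeQ}) (or starting from (\ref{newbinomeq})) gives, for $n\ge 0$,
\begin{equation*}
\tilde Q_n(\x,t)=\sum_{b=0}^{n}h_{n-b}(\x)\,h_b(\x)\,t^b+\sum_{m=2}^{n}h_m(\x)\,(t+t^2+\cdots+t^{\,m-1})\,\tilde Q_{n-m}(\x,t).
\end{equation*}
In both recurrences $h_k(\x)=s_{(k)}(\x)$ is Schur-positive, the geometric-series factors are palindromic, positive, and unimodal, the inductive hypothesis handles the factors $Q_{n-m}(\x,t)$ and $\tilde Q_{n-m}(\x,t)$, and a short centers-of-symmetry check (for the first recurrence, $\frac{n-1}{2}$ for the $h_n$ term and $0+\tfrac m2+\tfrac{(n-m)-1}{2}=\frac{n-1}{2}$ for a general term; for the second, $\frac n2$ throughout) shows all summands share the center $\frac{n-1}{2}$, respectively $\frac n2$. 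Proposition~\ref{tooluni}(1) then applies termwise and Proposition~\ref{tooluni}(2) assembles the whole polynomial; palindromicity drops out as a byproduct, and can also be seen directly from the invariance of the right-hand sides of (\ref{defQ}) and (\ref{genfuntildeQ}) under $t\mapsto t^{-1}$, $z\mapsto tz$.

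\smallskip
\noindent\emph{Main obstacle.} The delicate point—and the reason the second approach reorganizes the recursions—is that palindromicity of $Q_n(\x,t)$ and $\tilde Q_n(\x,t)$ is emergent, not termwise: the obvious decomposition (\ref{defBQ}) of $\tilde Q_n$, the decomposition (\ref{newbinomeq}), and the raw $m=n$ contribution in the $Q_n$ recursion all split the polynomial into palindromic unimodal pieces whose centers of symmetry disagree, so Proposition~\ref{tooluni}(2) cannot be applied to them. Two adjustments repair this. First, in the $Q_n$ recursion the stray summand $h_n(\x)$ must be fused with the $m=n$ term $h_n(\x)(t+\cdots+t^{\,n-1})$ to form the block $h_n(\x)(1+t+\cdots+t^{\,n-1})$, which is palindromic, Schur-positive, and Schur-unimodal with center $\frac{n-1}{2}$. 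Second, one needs that $\sum_{b=0}^{n}h_{n-b}(\x)h_b(\x)\,t^b$ is itself palindromic, Schur-positive, and Schur-unimodal (center $\frac n2$); this follows from the Pieri rule $h_{n-b}(\x)h_b(\x)=\sum_{j=0}^{\min(b,\,n-b)}s_{(n-j,\,j)}(\x)$, since then the $t^b$-coefficient is a sum of $\min(b,n-b)+1$ distinct Schur functions, nested so as to increase in the Schur order up to $b=\lfloor n/2\rfloor$ and decrease symmetrically afterward. Everything else is routine bookkeeping.
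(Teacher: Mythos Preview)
Your first approach is exactly the paper's proof: Schur-$\gamma$-positivity of $Q_n(\x,t)$ (Corollary~\ref{gescor}) and of $\tilde Q_n(\x,t)$ (Theorem~\ref{gesBcor}) plus Corollary~\ref{UniProp}.

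Your second approach is also correct and is very close to what the paper does in its ``alternative proof'' and in the reference to \cite[Corollary~C.5]{ShWa4}. For $Q_n(\x,t)$ your recurrence is precisely the coefficient form of (\ref{newFormQ}), which the paper cites as giving $h$-unimodality (hence Schur-unimodality) via Proposition~\ref{tooluni}. For $\tilde Q_n(\x,t)$ there is a minor organizational difference: the paper factors through the auxiliary polynomials $Q^0_n(\x,t)$, writing $\tilde Q_n(\x,t)=\sum_{k}\bigl(\sum_{j=0}^{k}t^jh_jh_{k-j}\bigr)Q^0_{n-k}(\x,t)$ and first establishing palindromicity, $h$-positivity, and $h$-unimodality of $Q^0_{n-k}(\x,t)$, whereas you induct directly on $\tilde Q_n(\x,t)$ using the recurrence obtained from (\ref{genfuntildeQ}). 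Both arguments hinge on the same Pieri computation showing $\sum_{b=0}^{k}h_{k-b}h_b\,t^b$ is Schur-unimodal with center $k/2$, and both feed into Proposition~\ref{tooluni} in the same way. Your route is a touch more self-contained; the paper's route has the side benefit of isolating the result for $Q^0_n(\x,t)$, which reappears in the derangement discussion of Section~\ref{DerSec}.
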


\begin{proof} Use Corollary~\ref{UniProp}.
\end{proof}

A stronger result  for $Q_n(\x,t)$ was proved by Stembridge \cite{Ste}, namely $h$-positivity and $h$-unimodality of $Q_n(\x,t)$. 
A simpler proof of this result given in  \cite[Corollary C.5]{ShWa4}
relies on the formula 
\begin{equation} \label{newFormQ} \sum_{n\ge 0} Q_n(\x,t) z^n = 1+ \frac{\sum_{n\ge 1} [n]_t h_n z^n} { 1-t \sum_{n\ge 2} [n-1]_t h_n z^n}\end{equation}  and Proposition~\ref{tooluni}. Here we give an alternative proof of Corollary~\ref{SchurUniCor} for $\tilde Q_n(\x,t)$ that  does not rely on Theorem~\ref{gesBcor}.

\begin{proof}[Alternative proof of Corollary~\ref{SchurUniCor} for  $\tilde Q_n(\x,t)$]  Let $Q^0_n(\x,t)$ be defined by 
$$\sum_{n\ge 0} Q^0_n(\x,t) z^n = {1-t \over H(tz) - t H(z)}= {1 \over 1-t\sum_{n\ge 2} [n-1]_t h_nz^n}.$$  It follows from Proposition~\ref{tooluni}  that $Q^0_n(\x,t)$ is palindromic, $h$-positive and $h$-unimodal with center of symmetry ${n \over 2}$.  By Proposition~\ref{newbinomprop}, 
\begin{equation} \label{Q0eq} \tilde Q_{n}(\x,t) =\sum_{k\ge 0}\left( \sum_{j=0}^k t^j h_j h_{k-j}\right) Q^0_{n-k}(\x,t). \end{equation}  

It is easy to see that $\sum_{j=0}^k t^j h_j h_{k-j}$ is palindromic with center of symmetry $\frac {k}2$.  It is clearly $h$-positive, which implies that it is Schur-positive. We claim that it is also  Schur-unimodal.  If $j \le k-j$ then by Pieri's rule $h_j h_{k-j} = \sum_{i=0}^j s_{k-i,i}$.  From this we can see that $\sum_{j=0}^k t^j h_j h_{k-j}$ is Schur-unimodal.   By Proposition~\ref{tooluni},  we have that
$\left(\sum_{j=0}^k t^j h_j h_{k-j} \right) Q^0_{n-k}(\x,t)$ is palindromic, Schur-positive, and Schur unimodal with center of symmetry equal to ${k \over 2} + {n-k \over 2} = {n \over 2}$.  Again by Proposition~\ref{tooluni}, we can conclude from (\ref{Q0eq}) that  $\tilde Q_n(\x,t)$ is palindromic, Schur-positive, and Schur-unimodal with center of symmetry ${n \over 2}$.
\end{proof}

\section{$q$-$\gamma$-positivity of the $q$-Eulerian and $q$-binomial-Eulerian polynomials} \label{qSec}

It this section we use the results of the previous section to prove that the $q$-Eulerian polynomials 
$$A_n(q,t) :=  \sum_{\sigma \in \S_n} q^{\maj(\sigma) -\exc(\sigma)} t^{\exc(\sigma)}$$ and $q$-binomial-Eulerian polynomials 
$$\tilde A_n(q,t) :=   1+ t \sum_{m=1}^n  \binom n m_q A_m(q,t)$$ 
 are $q$-$\gamma$-positive.
 
 From any symmetric function $G(x_1,x_2,\ldots)$ one obtains a power series in a single variable $q$ by the {\it stable principal specialization}, in which each $x_i$ is replaced by $q^{i-1}$.  Let
\[
\ps_q(G):=G(1,q,q^2,\ldots).
\]
This definition can be extended to polynomials in $\Lambda[t]$ by defining,
$$\ps_q(\sum_{i=o}^d G_i(\x) t^i) := \sum_{i=0}^d \ps_q(G_i(\x)) t^i.$$

 Let   $SYT_D$ denote the set of standard Young tableaux of skew shape $D$.  For $T \in SYT_D$ (written in English notation), let $\Des(T)$ be the set of  entries $i $ of $T$ for which $i$ is in a higher row than $i+1$, and let $\maj(T) = \sum_{i \in \Des(T)} i$.  It is well known  (see \cite[Proposition 7.19.11]{St}) that  
\begin{equation} \label{psSchur} \ps_q(s_D) = \frac {\sum_{T \in SYT_D} q^{\maj(T)}}{(1-q) \dots (1-q^n) },\end{equation} where $n$ is the number of cells of $D$.  It follows from this (and is easy to see directly) that
 $$\ps_q(h_n) = \frac{1} {(1-q) \dots (1-q^n) }.$$

By taking stable principal specialization of both sides of (\ref{defQ}), one can see that the following result is equivalent to (\ref{qEulereq}).   In fact, in \cite{ShWa2} this result was used to prove  (\ref{qEulereq}).

\begin{thm}[Shareshian and Wachs \cite{ShWa2}] \label{SWps} For all $n \ge 0$,
$$\ps_q(Q_n(\x,t)) = \frac {A_n(q,t)}{(1-q) \dots (1-q^{n})}$$
\end{thm}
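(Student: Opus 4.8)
The plan is to apply the stable principal specialization $\ps_q$ directly to the generating function defining $Q_n(\x,t)$ in (\ref{defQ}) and match the result against the known $q$-Euler formula (\ref{qEulereq}). Since $\ps_q$ acts coefficientwise on a symmetric function and we want to track the extra factor $(1-q)\cdots(1-q^n)$, the natural reformulation is: for each $n$, set $B_n(q,t) := (1-q)\cdots(1-q^n)\,\ps_q(Q_n(\x,t))$, so the claim becomes $B_n(q,t) = A_n(q,t)$ for all $n \ge 0$. Equivalently, in generating-function form, I would show
\[
\sum_{n \ge 0} \ps_q(Q_n(\x,t))\, z^n = \frac{\exp_q(z)(1-t)}{\exp_q(tz) - t\exp_q(z)}
\]
after the substitution $z \mapsto z$ is adjusted so that $z^n/[n]_q!$ in (\ref{qEulereq}) is absorbed by the $(1-q)\cdots(1-q^n)$ denominators coming from $\ps_q(h_n)$.

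The key steps, in order, are as follows. First, recall from (\ref{psSchur}) and the remark after it that $\ps_q(h_n) = 1/((1-q)\cdots(1-q^n))$, and hence $\ps_q(H(z)) = \sum_{n\ge 0} z^n/((1-q)\cdots(1-q^n))$. Second, observe that $(1-q)\cdots(1-q^n) = (1-q)^n [n]_q!$, so after the rescaling $z \mapsto (1-q)z$ one gets $\ps_q(H(z))\big|_{z \mapsto (1-q)z} = \sum_{n \ge 0} z^n/[n]_q! = \exp_q(z)$; this is the crucial identification of the specialized generating function $H$ with the $q$-exponential. Third, since $\ps_q$ is a ring homomorphism on $\Lambda$ (it is evaluation $x_i = q^{i-1}$) and since (\ref{defQ}) expresses $\sum_n Q_n(\x,t) z^n$ as a rational expression in $H(z)$ and $H(tz)$, applying $\ps_q$ and then the rescaling $z \mapsto (1-q)z$ turns the right-hand side of (\ref{defQ}) into $(1-t)\exp_q(z)/(\exp_q(tz) - t\exp_q(z))$, which is exactly the right-hand side of (\ref{qEulereq}). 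Fourth, extract the coefficient of $z^n$: on the $Q$ side this coefficient is $\ps_q(Q_n(\x,t))$ evaluated with $z \mapsto (1-q)z$, i.e. $(1-q)^n \ps_q(Q_n(\x,t))$; on the $A$ side it is $A_n(q,t)/[n]_q!$. Equating and multiplying through by $[n]_q!/(1-q)^n = \big((1-q)\cdots(1-q^n)\big)/(1-q)^{2n}$... — more cleanly, equating coefficients of $z^n$ before rescaling directly gives $\ps_q(Q_n(\x,t)) = A_n(q,t)/((1-q)\cdots(1-q^n))$, which is the assertion.

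I expect the main obstacle to be purely bookkeeping: keeping the rescaling factor $(1-q)$ consistent between the two sides so that the denominators $(1-q)\cdots(1-q^n) = (1-q)^n[n]_q!$ line up correctly with the $[n]_q!$ appearing in (\ref{qEulereq}), rather than any conceptual difficulty. One should also note, for the $n=0$ base case, that $Q_0(\x,t) = h_0 = 1$ specializes to $1 = A_0(q,t)/1$, consistent with the empty product convention. A secondary point worth stating explicitly is that $\ps_q$ commutes with the formal rational operations used in (\ref{defQ}): this is legitimate because $H(tz) - tH(z) = (1-t) + \text{(higher order in }z)$ is invertible as a formal power series in $z$ over the relevant coefficient ring, so no convergence issues arise and the homomorphism property of $\ps_q$ applies termwise in $z$. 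With these observations in place, the proof is a short computation and requires nothing beyond (\ref{qEulereq}), (\ref{psSchur}), and the definition (\ref{defQ}).
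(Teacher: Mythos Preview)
Your proposal is correct and follows exactly the approach the paper indicates: the paper states just before Theorem~\ref{SWps} that ``by taking stable principal specialization of both sides of (\ref{defQ}), one can see that the following result is equivalent to (\ref{qEulereq}),'' and your argument carries out precisely this computation (specialize $H(z)$, rescale $z\mapsto (1-q)z$ to obtain $\exp_q(z)$, match with (\ref{qEulereq}), and extract coefficients). The momentary hesitation in your bookkeeping is resolved correctly in the final line, so there is nothing to add.
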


An analogous result holds for the $q$-binomial-Eulerian polynomials.
\begin{cor} \label{TildeSWps} For all $n \ge 0$,
$$\ps_q(\tilde Q_n(\x,t)) = \frac {\tilde A_n(q,t)}{(1-q) \dots (1-q^{n})}.$$
\end{cor}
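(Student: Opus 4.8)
The plan is to apply the stable principal specialization $\ps_q$ directly to the defining identity (\ref{defBQ}) for $\tilde Q_n(\x,t)$ and to invoke Theorem~\ref{SWps}. Since $\ps_q$ is obtained from $\Lambda$ by the evaluation $x_i \mapsto q^{i-1}$, it is a ring homomorphism, and its extension to $\Lambda[t]$ (fixing $t$) is again a ring homomorphism; in particular $\ps_q\big(h_{n-m}(\x)\,Q_m(\x,t)\big) = \ps_q\big(h_{n-m}(\x)\big)\,\ps_q\big(Q_m(\x,t)\big)$.

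First I would dispose of the case $n=0$: the sums in (\ref{defBQ}) and in the definition of $\tilde A_n(q,t)$ are empty, so $\tilde Q_0(\x,t) = h_0(\x) = 1$, $\ps_q(1)=1$, $\tilde A_0(q,t)=1$, and the empty product in the denominator equals $1$, so both sides reduce to $1$. For $n \ge 1$, applying $\ps_q$ to (\ref{defBQ}) and using $\ps_q(h_j) = \tfrac{1}{(1-q)\cdots(1-q^j)}$ together with Theorem~\ref{SWps} gives
$$\ps_q(\tilde Q_n(\x,t)) = \frac{1}{(1-q)\cdots(1-q^n)} + t\sum_{m=1}^n \frac{1}{(1-q)\cdots(1-q^{n-m})}\cdot\frac{A_m(q,t)}{(1-q)\cdots(1-q^m)},$$
where the factor $(1-q)\cdots(1-q^{n-m})$ is understood to be the empty product when $m=n$.

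Next I would rewrite the product of $q$-factorials in the summand as a $q$-binomial coefficient: from $[j]_q = \tfrac{1-q^j}{1-q}$ one has $[n]_q! = \tfrac{(1-q)\cdots(1-q^n)}{(1-q)^n}$, whence
$$\frac{1}{\big((1-q)\cdots(1-q^m)\big)\big((1-q)\cdots(1-q^{n-m})\big)} = \frac{1}{(1-q)\cdots(1-q^n)}\binom{n}{m}_q.$$
Substituting this and factoring out $\tfrac{1}{(1-q)\cdots(1-q^n)}$ turns the right-hand side of the previous display into
$$\frac{1}{(1-q)\cdots(1-q^n)}\Big(1 + t\sum_{m=1}^n \binom{n}{m}_q A_m(q,t)\Big) = \frac{\tilde A_n(q,t)}{(1-q)\cdots(1-q^n)},$$
by the definition of $\tilde A_n(q,t)$, which is the claim. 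There is essentially no obstacle here: the proof is a direct specialization, and the only ingredients beyond Theorem~\ref{SWps} and the relevant definitions are the multiplicativity of $\ps_q$ and the elementary identity converting a product of two $q$-factorials into a $q$-binomial coefficient.
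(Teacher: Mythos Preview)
Your proposal is correct and follows essentially the same approach as the paper: apply $\ps_q$ to the defining identity (\ref{defBQ}), use the formula for $\ps_q(h_j)$ together with Theorem~\ref{SWps}, convert the resulting product of $q$-factorials into a $q$-binomial coefficient, and recognize the definition of $\tilde A_n(q,t)$. The only differences are cosmetic: you handle the case $n=0$ explicitly and spell out the multiplicativity of $\ps_q$ and the $q$-binomial identity in more detail than the paper does.
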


\begin{proof}  Starting with the definition of $\tilde Q_n(\x,t)$ given in (\ref{defBQ}), we have
\begin{eqnarray*} \ps_q(\tilde Q_n(\x,t)) &=& \ps_q(h_n) + t \sum_{m=1}^n  \ps_q(h_{n-m}) {\ps_q(Q_m(\x,t))}  \\ &=& \frac{1} {\prod_{i=1}^n (1-q^i)} +t \sum_{m=1}^n \frac {A_m(q,t)} {\prod_{i=1}^m (1-q^i) \prod_{i=1}^{n-m} (1-q^i)}  
\\  &=& \frac{1+ t \sum_{m=1}^n  \binom n m_q A_m(q,t)} {\prod_{i=1}^n (1-q^i)} 
\\ &=& \frac{\tilde A_n(q,t)} {\prod_{i=1}^n (1-q^i)}  ,\end{eqnarray*}
with the second equality following from Theorem~\ref{SWps}.
\end{proof}

By taking the stable principal  specialization of both sides of (\ref{genfuntildeQ}), one gets the following result. The consequences follow from (\ref{qEulereq}) and (\ref{qfixEulereq}), respectively. 
\begin{prop} \label{qnewbinomprop}
$$ \sum_{n \ge 0} \tilde A_n(q,t) \frac{z^n}{[q]_n!} = \frac{(1-t) \exp_q(z) \exp_q(tz)}{ \exp_q(tz) - t\exp_q(z)} .$$
Consequently $$\tilde A_n(q,t) = \sum_{m=0}^n \binom n m_q A_m(q,t) t^{n-m}$$
and 
$$\tilde A_n(q,t) = \sum_{m=0}^n \binom n m_q A_m(q,t,t). $$
\end{prop}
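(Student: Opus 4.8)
The plan is to derive the generating function identity by applying the stable principal specialization $\ps_q$ to the symmetric function identity (\ref{genfuntildeQ}) and then rescaling $z\mapsto(1-q)z$. Recall that $\ps_q(H(z))=\sum_{n\ge0}\frac{z^n}{(1-q)\cdots(1-q^n)}=\sum_{n\ge0}\frac{z^n}{(1-q)^n[n]_q!}$, so that under the substitution $z\mapsto(1-q)z$ this series becomes $\exp_q(z)$; similarly $\ps_q(H(tz))$ becomes $\exp_q(tz)$. Since $\ps_q$ is a continuous ring homomorphism on the relevant completed power series ring and the denominator $H(tz)-tH(z)$ has constant term $1-t$ — a unit once we work over coefficients in $\R(t)$ (equivalently, divide numerator and denominator by $1-t$ to obtain power series with constant term $1$) — applying $\ps_q$ to the right-hand side of (\ref{genfuntildeQ}) and then substituting $z\mapsto(1-q)z$ produces exactly $\frac{(1-t)\exp_q(z)\exp_q(tz)}{\exp_q(tz)-t\exp_q(z)}$. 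On the left-hand side, Corollary~\ref{TildeSWps} gives $\ps_q(\tilde Q_n(\x,t))=\frac{\tilde A_n(q,t)}{(1-q)^n[n]_q!}$, so after $z\mapsto(1-q)z$ the series $\sum_{n\ge0}\ps_q(\tilde Q_n(\x,t))z^n$ becomes $\sum_{n\ge0}\tilde A_n(q,t)\frac{z^n}{[n]_q!}$. Equating the two sides gives the asserted generating function formula.

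For the consequences I would extract the coefficient of $\frac{z^n}{[n]_q!}$ using the $q$-binomial convolution: if $F(z)=\sum_k f_k\frac{z^k}{[k]_q!}$ and $G(z)=\sum_m g_m\frac{z^m}{[m]_q!}$, then $F(z)G(z)=\sum_n\bigl(\sum_{m=0}^n\binom n m_q f_{n-m}g_m\bigr)\frac{z^n}{[n]_q!}$. For the first consequence, factor the right-hand side as $\exp_q(tz)\cdot\frac{(1-t)\exp_q(z)}{\exp_q(tz)-t\exp_q(z)}$ and identify the second factor with $\sum_m A_m(q,t)\frac{z^m}{[m]_q!}$ via (\ref{qEulereq}); since $\exp_q(tz)=\sum_k t^k\frac{z^k}{[k]_q!}$, the convolution yields $\tilde A_n(q,t)=\sum_{m=0}^n\binom n m_q A_m(q,t)\,t^{n-m}$. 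For the second, factor instead as $\exp_q(z)\cdot\frac{(1-t)\exp_q(tz)}{\exp_q(tz)-t\exp_q(z)}$ and identify the second factor with $\sum_m A_m(q,t,t)\frac{z^m}{[m]_q!}$ via (\ref{qfixEulereq}) specialized at $r=t$; the convolution then yields $\tilde A_n(q,t)=\sum_{m=0}^n\binom n m_q A_m(q,t,t)$. (Alternatively, the first consequence is simply the image of (\ref{newbinomeq}) under $\ps_q$, using Theorem~\ref{SWps} and Corollary~\ref{TildeSWps} and clearing denominators.)

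I expect the only real obstacle to be the routine bookkeeping of pushing $\ps_q$ through the rational expression on the right of (\ref{genfuntildeQ}) and of tracking how the rescaling $z\mapsto(1-q)z$ interacts with the normalizing factors $(1-q)^n[n]_q!$; once one fixes the coefficient ring to be $\R(t)$ (or $\Q(q,t)$), so that $1-t$ is invertible and all the manipulations take place among well-defined formal power series, there is nothing deeper to check.
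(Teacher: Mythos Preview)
Your proposal is correct and follows essentially the same approach as the paper: the paper simply states that the generating function identity is obtained by taking the stable principal specialization of both sides of (\ref{genfuntildeQ}), and that the two consequences follow from (\ref{qEulereq}) and (\ref{qfixEulereq}), respectively. You have filled in exactly these steps, including the rescaling $z\mapsto(1-q)z$ and the $q$-binomial convolutions, which the paper leaves implicit.
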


  In \cite[Remark 5.5]{ShWa2}, the authors mention that (\ref{ges}) can be used to establish $q$-$\gamma$-positivity of $A_n(q,t)$.  Now we carry this out by using (\ref{symFSeq}) to obtain the $\gamma$-coefficients.
  The following result is  proved in \cite[Equations (1.4) and (6.1)]{LiShWa} without the use of  (\ref{ges}).  
  
\begin{thm} \label{qFSth} Let $\Gamma_{n,k}$ be the set of permutations $\sigma \in \sg_n$ with no double descents, no final descent, and with $\des(\sigma)=k$, and let 
$$\gamma_{n,k}(q) := \sum_{\sigma \in \Gamma_{n,k}} q^{\inv(\sigma) }  \,\,\,\,\,\,(=\sum_{\sigma \in \Gamma_{n,k}} q^{\maj(\sigma^{-1}) }).$$
Then \begin{equation} \label{qFSeq} A_n(q,t) = \sum_{k=0}^{\lfloor \frac {n-1} 2 \rfloor} \gamma_{n,k}(q) \, t^k (1+t)^{n-1-2k}. \end{equation}  
Consequently the $q$-Eulerian polynomials $A_n(q,t)$ are $q$-$\gamma$-positive.
\end{thm}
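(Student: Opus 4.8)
The plan is to obtain Theorem~\ref{qFSth} by specializing the Schur symmetric-function identity (\ref{symFSeq}) of Corollary~\ref{gescor} under the stable principal specialization $\ps_q$, using Theorem~\ref{SWps}. Applying $\ps_q$ to (\ref{symFSeq}) and substituting $\ps_q(Q_n(\x,t)) = A_n(q,t)/\prod_{i=1}^n(1-q^i)$ yields
$$\frac{A_n(q,t)}{\prod_{i=1}^n(1-q^i)} \;=\; \sum_{k=0}^{\lfloor\frac{n-1}{2}\rfloor}\ps_q\!\big(\gamma_{n,k}(\x)\big)\,t^k(1+t)^{n-1-2k}.$$
So it suffices to show that $\ps_q(\gamma_{n,k}(\x)) = \gamma_{n,k}(q)/\prod_{i=1}^n(1-q^i)$; clearing denominators then gives (\ref{qFSeq}), and $q$-$\gamma$-positivity is immediate because each $\gamma_{n,k}(q)$ is a sum of powers of $q$.

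Thus the heart of the matter is the evaluation of $\ps_q(\gamma_{n,k}(\x))$. Since $\gamma_{n,k}(\x) = \sum_{D\in\mathcal H_{n,k}}s_D(\x)$, applying (\ref{psSchur}) to each term reduces the task to the combinatorial identity
$$\sum_{D\in\mathcal H_{n,k}}\sum_{T\in SYT_D}q^{\maj(T)} \;=\; \gamma_{n,k}(q),$$
which I would prove via a bijection. Reading a standard Young tableau $T$ of skew hook shape $D$ from southwest to northeast gives, by the standardization of the correspondence used in the proof of Corollary~\ref{gescor}, a permutation $\sigma_T\in\sg_n$ (in one-line notation) with no double descent, no final descent, and $\des(\sigma_T)=k$; that is, $\sigma_T\in\Gamma_{n,k}$. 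Conversely $D$ and $T$ are recovered from $\sigma_T$ — the skew hook has a column of size $2$ exactly at each descent of $\sigma_T$ (so there are $k$ such, none adjacent, and the last column has size $1$), and $T$ is the filling of that ribbon by the entries of $\sigma_T$ — so $T\mapsto\sigma_T$ is a bijection from $\bigsqcup_{D\in\mathcal H_{n,k}}SYT_D$ onto $\Gamma_{n,k}$.

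To transport the statistic, one checks that along the southwest-to-northeast path the row index of the cells is weakly decreasing and drops by exactly $1$ at each descent of $\sigma_T$; hence the cell containing the value $i$ lies strictly higher than the cell containing $i+1$ precisely when $\sigma_T^{-1}(i)>\sigma_T^{-1}(i+1)$, i.e. $\Des(T)=\Des(\sigma_T^{-1})$ and so $\maj(T)=\maj(\sigma_T^{-1})$. Therefore $\sum_{D\in\mathcal H_{n,k}}\sum_{T\in SYT_D}q^{\maj(T)} = \sum_{\sigma\in\Gamma_{n,k}}q^{\maj(\sigma^{-1})}$, and the proof is complete once we know $\sum_{\sigma\in\Gamma_{n,k}}q^{\maj(\sigma^{-1})} = \sum_{\sigma\in\Gamma_{n,k}}q^{\inv(\sigma)}$. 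Since $\inv(\sigma)=\inv(\sigma^{-1})$, this is the assertion that $\inv$ and $\maj$ are equidistributed over $\Gamma_{n,k}^{-1}$, which holds because $\Gamma_{n,k}^{-1}$ is a union of fibers of the map $\tau\mapsto\Des(\tau^{-1})$, and $\inv$ and $\maj$ are equidistributed over each such fiber (Foata--Sch\"utzenberger; this equidistribution also appears in \cite{LiShWa}). This gives $\gamma_{n,k}(q)=\sum_{\sigma\in\Gamma_{n,k}}q^{\inv(\sigma)}$, as claimed.

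I expect the main obstacle to be the statistic transport in the third step: verifying carefully that, under the ribbon reading bijection, $\maj(T)$ matches $\maj(\sigma_T^{-1})$ — and not $\maj(\sigma_T)$ itself, for which the $\gamma$-expansion would be false — and then reconciling $\maj(\sigma^{-1})$ with $\inv(\sigma)$ via the equidistribution over inverse-descent classes. Everything else is routine bookkeeping with principal specializations.
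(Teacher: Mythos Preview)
Your proposal is correct and follows essentially the same route as the paper: apply $\ps_q$ to the symmetric-function identity (\ref{symFSeq}), use Theorem~\ref{SWps} on the left, evaluate $\ps_q(\gamma_{n,k}(\x))$ via (\ref{psSchur}) and the southwest-to-northeast reading bijection between standard tableaux on skew hooks in $\mathcal H_{n,k}$ and permutations in $\Gamma_{n,k}$, identify $\Des(T)$ with $\Des(\sigma_T^{-1})$, and finish with the Foata--Sch\"utzenberger equidistribution of $\inv(\sigma)$ and $\maj(\sigma^{-1})$ on descent classes. The only cosmetic difference is that the paper states the last step directly as ``$\Gamma_{n,k}$ is a union of descent classes, so $\sum_{\sigma\in\Gamma_{n,k}}q^{\maj(\sigma^{-1})}=\sum_{\sigma\in\Gamma_{n,k}}q^{\inv(\sigma)}$,'' whereas you pass through $\Gamma_{n,k}^{-1}$ and inverse-descent fibers; these are equivalent formulations of the same Foata--Sch\"utzenberger result.
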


\begin{proof}  By applying stable principal specialization to both sides of (\ref{symFSeq}) we have
\begin{equation} \label{psSymFSeq} \ps_q(Q_n(\x,t)) =  \sum_{k=0}^{\lfloor \frac {n-1} 2 \rfloor} \ps_q(\gamma_{n,k}(\x))  \, t^k (1+t)^{n-1-2k}. \end{equation}

By   (\ref{symAEcoef}) and (\ref{psSchur}), we have
\begin{eqnarray} \label{psgammaeq} \ps_q(\gamma_{n,k}(\x)) &=& \sum_{D \in  \mathcal H_{n,k}} \ps_q(s_D(\x)) \\ \nonumber
&=& \sum_{D \in  \mathcal H_{n,k}} \frac {\sum_{T \in SYT_D} q^{\maj(T)}}{(1-q) \dots (1-q^n) }
.\end{eqnarray}

 If  $D$ is a skew hook then $SYT_D$ corresponds bijectively to the set of permutations in $\sg_n$ with a fixed descent set determined by $D$.  Indeed, by reading the entries of $T \in SYT_D$ from southwest to northeast, one gets a permutation $ \varphi(T) \in \sg_n$.  
Descents are encountered whenever one goes up a column.   So  $\Des(\varphi(T)) $ equals the set of all $i \in [n-1]$ such that the $i$th cell of $D$ (ordered from southwest to northeast) is directly below the $(i+1)$st cell of  $D$. It follows that  if $D \in \mathcal H_{n,k}$ 
and $T \in SYT_D$ then $\varphi(T) \in \Gamma_{n,k}$.

Note also
that for $T \in SYT_D$, $\Des(T) = \Des(\varphi(T)^{-1})$.  We can now conclude that
\begin{equation}\label{skewhookeq} \sum_{D \in \mathcal H_{n,k}} \sum_{T \in SYT_D} q^{\maj(T)} = \sum_{\sigma \in \Gamma_{n,k}} q^{\maj(\sigma^{-1})}.\end{equation}

For each $J \subset [n-1]$, the {\em descent class} of $J$ is the set $\{\sigma \in \sg_n : \Des(\sigma) = J\}$.   Note that 
$\Gamma_{n,k}$ is a union of descent classes. By  the Foata-Sch\"utzenberger result   \cite[Theorem 1]{FoSc2}  that $\inv(\sigma)$ and $\maj(\sigma^{-1})$  are equidistributed on descent classes, we have   
 $$ \sum_{\sigma \in \Gamma_{n,k}} q^{\maj(\sigma^{-1})}= \sum_{\sigma \in \Gamma_{n,k}} q^{\inv(\sigma)}.$$
Combining this with (\ref{skewhookeq}) and substituting  in (\ref{psgammaeq}) results in
$$ \ps_q(\gamma_{n,k}(\x)) =
\frac{\sum_{\sigma \in \Gamma_{n,k}} q^{\inv(\sigma)}}{(1-q) \dots (1-q^n) }.$$
It follows that the right side of (\ref{psSymFSeq}) equals 
$$\frac{ \sum_{k=0}^{\lfloor \frac {n-1} 2 \rfloor} \sum_{\sigma \in \Gamma_{n,k}} q^{\inv(\sigma)} \, t^k (1+t)^{n-1-2k}} {(1-q) \dots (1-q^n)},$$
while, by  Theorem~\ref{SWps}, the left side equals
$$\frac {A_n(q,t)}{(1-q) \dots (1-q^{n})},$$
thereby completing the proof.
\end{proof}

By  taking the stable principal specialization of both sides of equation (\ref{BEgammaeq}) and using an argument  analogous  to the proof of Theorem~\ref{qFSth}, we obtain the following result. 
\begin{thm} \label{qBFSth} Let $\tilde \Gamma_{n,k}$ be the set of permutations $\sigma \in \sg_n$ with no double descents and with $\des(\sigma)=k$, and let 
\begin{equation} \label{qBEcoef} \tilde \gamma_{n,k}(q) := \sum_{\sigma \in \tilde\Gamma_{n,k}} q^{\inv(\sigma) } \,\,\,\,\,\,(=  \sum_{\sigma \in \tilde\Gamma_{n,k}} q^{\maj(\sigma^{-1}) }).\end{equation}
Then \begin{equation} \label{qBEgammaeq} \tilde A_n(q,t) = \sum_{k=0}^{\lfloor \frac {n} 2 \rfloor} \tilde  \gamma_{n,k}(q) \, t^k (1+t)^{n-2k}. \end{equation}  
Consequently, the $q$-binomial-Eulerian polynomials $\tilde A_n(q,t)$ are $q$-$\gamma$-positive.
\end{thm}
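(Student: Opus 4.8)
The plan is to mimic the proof of Theorem~\ref{qFSth} almost verbatim, replacing the Schur-$\gamma$-positivity formula (\ref{symFSeq}) for $Q_n(\x,t)$ by the corresponding formula (\ref{BEgammaeq}) for $\tilde Q_n(\x,t)$ from Theorem~\ref{gesBcor}, and replacing the specialization statement of Theorem~\ref{SWps} by its binomial-Eulerian analog, Corollary~\ref{TildeSWps}. First I would apply $\ps_q$ to both sides of (\ref{BEgammaeq}), using that $\ps_q$ is $t$-linear, to obtain
$$\ps_q(\tilde Q_n(\x,t)) = \sum_{k=0}^{\lfloor n/2 \rfloor} \ps_q(\tilde \gamma_{n,k}(\x)) \, t^k (1+t)^{n-2k}.$$
Then, expanding $\tilde\gamma_{n,k}(\x) = \sum_{D \in \tilde{\mathcal H}_{n,k}} s_D(\x)$ via (\ref{symBEcoef}) and applying (\ref{psSchur}) to each skew hook $D$, I get
$$\ps_q(\tilde\gamma_{n,k}(\x)) = \frac{\sum_{D \in \tilde{\mathcal H}_{n,k}} \sum_{T \in SYT_D} q^{\maj(T)}}{(1-q)\cdots(1-q^n)}.$$

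The combinatorial heart of the argument is the bijection $\varphi$ from the proof of Theorem~\ref{qFSth}: reading a standard Young tableau $T$ of skew hook shape $D$ from southwest to northeast yields a permutation $\varphi(T) \in \sg_n$, with descents recorded exactly when one moves up a column of $D$. The only change here is the class of shapes: $\tilde{\mathcal H}_{n,k}$ consists of skew hooks with $k$ columns of size $2$ and $n-2k$ columns of size $1$, with no constraint forcing the last column to have size $1$. Consequently $\varphi$ restricts to a bijection from $\bigcup_{D \in \tilde{\mathcal H}_{n,k}} SYT_D$ onto $\tilde\Gamma_{n,k}$ — the set of $\sigma \in \sg_n$ with no double descents and $\des(\sigma) = k$, where the ``no final descent'' condition has been dropped precisely because the last column is now allowed to have size $2$. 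As before, $\Des(T) = \Des(\varphi(T)^{-1})$ for all such $T$, so
$$\sum_{D \in \tilde{\mathcal H}_{n,k}} \sum_{T \in SYT_D} q^{\maj(T)} = \sum_{\sigma \in \tilde\Gamma_{n,k}} q^{\maj(\sigma^{-1})}.$$
Since $\tilde\Gamma_{n,k}$ is a union of descent classes, the Foata-Sch\"utzenberger equidistribution of $\inv$ and $\maj\circ(\ )^{-1}$ on descent classes (\cite[Theorem 1]{FoSc2}) gives $\sum_{\sigma \in \tilde\Gamma_{n,k}} q^{\maj(\sigma^{-1})} = \sum_{\sigma \in \tilde\Gamma_{n,k}} q^{\inv(\sigma)} = \tilde\gamma_{n,k}(q)$, which also justifies the parenthetical alternative expression in (\ref{qBEcoef}).

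Putting these together, $\ps_q(\tilde\gamma_{n,k}(\x)) = \tilde\gamma_{n,k}(q)/\bigl((1-q)\cdots(1-q^n)\bigr)$, so the specialized right-hand side of (\ref{BEgammaeq}) becomes $\bigl(\sum_k \tilde\gamma_{n,k}(q)\, t^k(1+t)^{n-2k}\bigr)\big/\bigl((1-q)\cdots(1-q^n)\bigr)$; meanwhile, by Corollary~\ref{TildeSWps}, the specialized left-hand side is $\tilde A_n(q,t)/\bigl((1-q)\cdots(1-q^n)\bigr)$. Clearing the common denominator yields (\ref{qBEgammaeq}), and $q$-$\gamma$-positivity follows since each $\tilde\gamma_{n,k}(q)$ is a sum of powers of $q$, hence $q$-positive. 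I do not anticipate a serious obstacle: every ingredient is already in place, and the one point needing care is checking that dropping ``last column of size $1$'' in the shape condition corresponds exactly to dropping ``no final descent'' in the permutation condition under $\varphi$ — which is immediate from how $\varphi$ translates up-moves in the last column into a final descent.
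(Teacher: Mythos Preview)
Your proposal is correct and follows essentially the same approach as the paper, which simply states that the result is obtained ``by taking the stable principal specialization of both sides of equation (\ref{BEgammaeq}) and using an argument analogous to the proof of Theorem~\ref{qFSth}.'' You have spelled out exactly that analogous argument, and your observation that dropping the last-column condition on $\tilde{\mathcal H}_{n,k}$ corresponds under $\varphi$ to dropping the ``no final descent'' condition on $\tilde\Gamma_{n,k}$ is the only point requiring comment, which you handle correctly.
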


The following result for $A_n(q,t)$ was first obtained by the authors in \cite{ShWa2}.

\begin{cor} \label{quniCor} For all $n \ge 0$, the polynomials $A_n(q,t)$ and $\tilde A_n(q,t)$ are palindromic and $q$-unimodal.
\end{cor}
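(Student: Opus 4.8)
The plan is to obtain this as an immediate consequence of the $q$-$\gamma$-positivity established in Theorems~\ref{qFSth} and~\ref{qBFSth}, combined with the general implication recorded in Corollary~\ref{UniProp}. Concretely, I would apply Corollary~\ref{UniProp} with the $\R$-algebra $R=\R[q]$ and the basis $b=\{q^i:i\in\N\}$; as noted right after Definition~\ref{paldef}, for this choice $b$-positivity is exactly $q$-positivity and $b$-unimodality is exactly $q$-unimodality, so the abstract statement of Corollary~\ref{UniProp} specializes to precisely the conclusion we want.

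The only point that needs to be checked is that the $\gamma$-coefficients appearing in~(\ref{qFSeq}) and~(\ref{qBEgammaeq}) are $q$-positive, and this is immediate from their definitions: $\gamma_{n,k}(q)=\sum_{\sigma\in\Gamma_{n,k}}q^{\inv(\sigma)}$ and $\tilde\gamma_{n,k}(q)=\sum_{\sigma\in\tilde\Gamma_{n,k}}q^{\inv(\sigma)}$ are nonnegative integer combinations of powers of $q$. Hence, for $n\ge 1$, equation~(\ref{qFSeq}) exhibits $A_n(q,t)$ as a $q$-$\gamma$-positive polynomial of degree $n-1$ and equation~(\ref{qBEgammaeq}) exhibits $\tilde A_n(q,t)$ as a $q$-$\gamma$-positive polynomial of degree $n$; Corollary~\ref{UniProp} then yields that $A_n(q,t)$ is palindromic with center of symmetry $\tfrac{n-1}{2}$ and $q$-unimodal, and that $\tilde A_n(q,t)$ is palindromic with center of symmetry $\tfrac n2$ and $q$-unimodal. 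For the degenerate case $n=0$ one simply observes $A_0(q,t)=\tilde A_0(q,t)=1$, which is trivially palindromic (center $0$) and $q$-unimodal. One could alternatively bypass the $\gamma$-positivity route and argue directly by applying $\ps_q$ to Corollary~\ref{SchurUniCor}, using that $\ps_q$ carries Schur-positive symmetric functions to $q$-positive polynomials by~(\ref{psSchur}) and acts coefficientwise in $t$, together with Theorem~\ref{SWps} and Corollary~\ref{TildeSWps}.

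I do not anticipate any real obstacle: all of the substantive work has already been done in the $q$-$\gamma$-positivity theorems and in the elementary Proposition~\ref{tooluni}/Corollary~\ref{UniProp}. The only things requiring a modicum of care are the bookkeeping of the two different degrees (and hence centers of symmetry, $\tfrac{n-1}{2}$ versus $\tfrac n2$) and the trivial separate treatment of $n=0$, since~(\ref{qFSeq}) as written has an empty right-hand side when $n=0$.
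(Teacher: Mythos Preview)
Your proposal is correct and follows essentially the same approach as the paper: the paper treats Corollary~\ref{quniCor} as an immediate consequence of Theorems~\ref{qFSth} and~\ref{qBFSth} via Corollary~\ref{UniProp}, just as Corollary~\ref{SchurUniCor} was deduced from the Schur-$\gamma$-positivity results. The paper then also supplies an alternative proof avoiding $\gamma$-positivity (using generating functions and Proposition~\ref{tooluni}), but your primary argument matches the intended one.
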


Just as for Corollary~\ref{SchurUniCor}, an alternative proof of Corollary~\ref{quniCor} can be given which doesn't make use of Theorems~\ref{qFSth} and~\ref{qBFSth}.  For $A_n(q,t)$ a simple proof is given in Appendix C.1 of \cite{ShWa4} by using the  formula 
$$ 1+\sum_{n \geq 1}A_n(q,t)\frac{z^n}{[n]_q!} = 1 + \frac {\sum_{n \ge 1} [n]_t \frac{z^n}{[n]_q!}} {1-t\sum_{n\ge 2} [n-1]_t \frac{z^n}{[n]_q!} }
$$
 obtained by manipulating (\ref{qEulereq}).

 \begin{proof}[Alternative proof of Corollary~\ref{quniCor} for $\tilde A_n(q,t)$] 
  By (\ref{qfixEulereq}) and Proposition~\ref{qnewbinomprop},
 $$\tilde A_n(q,t)  = \sum_{k \ge 0} \left (\sum_{j=0}^k \binom  k j_q  t^j \right) A_{n-k}(q,t,0).$$
 Since 
$$ \sum_{n \ge 0 } A_n(q,t,0) \frac {z^n} {[n]_q!} = \frac{ (1-t)} {\exp_q(tz) - t\exp_q(z)} = \frac 1 {1-t\sum_{n\ge 2} [n-1]_t \frac {z^n}{[n]_q!} },$$
it follows from Proposition~\ref{tooluni} that 
$A _n(q,t,0)$ is palindromic and $q$-unimodal with center of symmetry $\frac n 2$.  It is well known that 
 $\sum_{j=0}^k \binom  k j_q  t^j$ is palindromic and $q$-unimodal with center of symmetry $\frac k 2$. Note that this  follows from taking the stable principal specialization of $\sum_{j=0}^k h_j h_{k-j} t^j$, which we observed to be Schur-unimodal in the alternative proof of Corollary~\ref{SchurUniCor}.  By Proposition~\ref{tooluni},  $\tilde A_n(q,t)$ is a sum of palindromic, $q$-positive, $q$-unimodal polynomials with center of symmetry $\frac k 2 + \frac {n-k} 2$.  It therefore follows again from Proposition~\ref{tooluni} that $\tilde A_n(q,t)$ is palindromic and $q$-unimodal.
 \end{proof}

Note that palindromicity of $\tilde A_n(q,t)$ is equivalent to the following $q$-analog of (\ref{cgkid}).
\begin{cor}[Chung-Graham \cite{ChGr} and Han-Lin-Zeng  \cite{HaLiZe}]  For positive integers $r,s$,
$$
\sum_{m=1}^{r+s}{{r+s} \choose {m}}_q a_{m,r-1}(q)=\sum_{m=1}^{r+s}{{r+s} \choose {m}}_q a_{m,s-1}(q).
$$
\end{cor}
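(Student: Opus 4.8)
The plan is to deduce the corollary directly from palindromicity of $\tilde A_n(q,t)$, which has already been established in Corollary~\ref{quniCor}. First I would extract the coefficients $\tilde a_{n,j}(q)$ from the defining formula $\tilde A_n(q,t) = 1 + t\sum_{m=1}^n \binom n m_q A_m(q,t)$. Writing $A_m(q,t) = \sum_{i=0}^{m-1} a_{m,i}(q) t^i$ and collecting powers of $t$ (so that $t A_m(q,t)$ contributes $a_{m,j-1}(q)$ to the coefficient of $t^j$), one gets $\tilde a_{n,0}(q) = 1$ and, for $1 \le j \le n$,
$$\tilde a_{n,j}(q) = \sum_{m=1}^n \binom n m_q a_{m,j-1}(q),$$
where the terms with $m < j$ vanish because $a_{m,i}(q) = 0$ for $i \ge m$. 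In particular $\tilde a_{n,n}(q) = a_{n,n-1}(q) = 1$, so $\tilde A_n(q,t)$ has degree exactly $n$ and center of symmetry $n/2$, which is exactly what palindromicity (Corollary~\ref{quniCor}) refers to.

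Next I would invoke palindromicity: $\tilde a_{n,j}(q) = \tilde a_{n,n-j}(q)$ for $0 \le j \le n$. For $1 \le j \le n-1$ both $j$ and $n-j$ lie in $\{1,\dots,n-1\}$, so the displayed formula applies to both sides and yields
$$\sum_{m=1}^n \binom n m_q a_{m,j-1}(q) = \sum_{m=1}^n \binom n m_q a_{m,n-j-1}(q).$$
Substituting $n = r+s$ and $j = r$ (which forces $r \ge 1$ and $s \ge 1$, matching the hypothesis) converts this into precisely the claimed identity, since $n-j-1 = s-1$. For the equivalence asserted in the remark preceding the corollary, I would simply run the argument backwards: the pairs $(n,j)$ with $n \ge 2$ and $1 \le j \le n-1$ are exactly those of the form $(r+s,r)$ with $r,s \ge 1$, so the family of $q$-identities over all such $r,s$ is equivalent to $\tilde a_{n,j}(q) = \tilde a_{n,n-j}(q)$ for all $n$ and $j$; combined with $\tilde a_{n,0}(q) = \tilde a_{n,n}(q) = 1$, this is palindromicity of $\tilde A_n(q,t)$.

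There is no genuine obstacle here — the corollary is an immediate unwinding of definitions once palindromicity is in hand. The only points needing a moment's care are the bookkeeping of the summation range (the vanishing of $a_{m,j-1}(q)$ for $m < j$, which lets one write $\sum_{m=1}^{r+s}$ rather than $\sum_{m=r}^{r+s}$) and the verification of the boundary coefficients, which pins down the degree of $\tilde A_n(q,t)$ as $n$ so that "center of symmetry $n/2$" means what it should.
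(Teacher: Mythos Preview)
Your proposal is correct and follows exactly the approach the paper indicates: the paper simply remarks that palindromicity of $\tilde A_n(q,t)$ (Corollary~\ref{quniCor}) is equivalent to the stated identity, and you have carefully unwound the coefficient bookkeeping that makes this equivalence explicit. There is nothing to add.
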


A symmetric function analog is given by the following result, which is equivalent to palindromicity of $\tilde Q_n(\x,t)$. (A more general result appears as Theorem 2 in the preprint \cite{Lin} of Z. Lin.) 

\begin{cor} For positive integers $r,s$,
$$\sum_{m=1}^{r+s} h_{r+s-m} Q_{m,r-1} =  \sum_{m=1}^{r+s} h_{r+s-m} Q_{m,s-1}.$$
\end{cor}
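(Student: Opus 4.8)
The plan is to recognize this identity as nothing more than palindromicity of $\tilde Q_{r+s}(\x,t)$, which we have already established. Write $Q_m(\x,t) = \sum_{j \ge 0} Q_{m,j}\, t^j$, so that $Q_{m,j}$ denotes the coefficient of $t^j$ (and $Q_{m,j} = 0$ for $j \ge m$, since $Q_m(\x,t)$ has $t$-degree $m-1$). By the defining recursion (\ref{defBQ}), for every $j$ with $1 \le j \le n$ the coefficient of $t^j$ in $\tilde Q_n(\x,t)$ equals $\sum_{m=1}^n h_{n-m}(\x)\, Q_{m,j-1}$; the summands with $m \le j-1$ contribute zero, so it is harmless to include them.

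Next I would invoke Corollary~\ref{SchurUniCor}, which asserts that $\tilde Q_n(\x,t)$ is palindromic. Since the $m=n$ summand in (\ref{defBQ}) contributes $t\,Q_n(\x,t)$, whose $t$-degree is $n$, the polynomial $\tilde Q_n(\x,t)$ has $t$-degree exactly $n$ and center of symmetry $n/2$; hence the coefficient of $t^j$ equals the coefficient of $t^{n-j}$ for all $j$. (Alternatively this is immediate from the Schur-$\gamma$-expansion (\ref{BEgammaeq}) together with Corollary~\ref{UniProp}, since each summand $t^k(1+t)^{n-2k}$ is palindromic with center $n/2$.)

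Finally, set $n = r+s$ and take $j = r$, so that $n-j = s$. Because $r, s \ge 1$, both $r$ and $s$ lie in $\{1,\dots,r+s\}$, so the coefficient formula above applies at both $t^r$ and $t^s$, and palindromicity gives
$$\sum_{m=1}^{r+s} h_{r+s-m}\, Q_{m,r-1} = \sum_{m=1}^{r+s} h_{r+s-m}\, Q_{m,s-1},$$
which is the assertion. There is no substantive obstacle here beyond this bookkeeping; the only point requiring a moment's care is confirming that $\tilde Q_{r+s}(\x,t)$ has $t$-degree exactly $r+s$ with center of symmetry $(r+s)/2$, so that palindromicity genuinely pairs the $t^r$- and $t^s$-coefficients, and that the hypothesis $r,s\ge1$ keeps the relevant indices in range.
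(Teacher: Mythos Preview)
Your argument is correct and follows exactly the approach indicated in the paper: the corollary is stated there as being equivalent to palindromicity of $\tilde Q_n(\x,t)$ (established in Corollary~\ref{SchurUniCor}), and you have simply spelled out the bookkeeping that makes this equivalence explicit. The only content beyond what the paper writes is your careful verification that the center of symmetry is $n/2$ and that $r,s\ge 1$ keeps both indices in the range where the coefficient formula from~(\ref{defBQ}) applies, all of which is correct.
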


\section{Geometric interpretation: equivariant Gal phenomenon} \label{GalSec}

In this section we will present  interpretations of results in Section~\ref{SchurSec} using geometry and representation theory.  The idea behind such interpretations was, to our knowledge, first employed by Stanley, and is discussed in \cite{st2}. 

Herein, a {\em polytope} is the convex hull of a finite set of points in some $\R^{d}$.  A polytope is {\em simplicial} if every proper face is a simplex.  Let $P$ be a  $d$-dimensional simplicial  polytope. Associated with  $P$ is the  $h$-polynomial  defined by 
$$h_P(t) := \sum_{j=0}^d f_{j-1} (t-1)^{d-j} ,
$$
where $f_i$ is the number of faces of $P$ of dimension $i$.  
 It is well known that the $h$-polynomial
of every simplicial polytope is palindromic and unimodal.  Indeed, palindromicity is equivalent to the Dehn-Sommerville equations, and unimodality was proved by Stanley \cite{st1}  as part of the  g-Theorem of Billera, Lee and Stanley (see e.g.,  \cite{st,Bi}).

A simplicial complex is said to be {\em flag} if it is the clique complex of its 1-skeleton; that is, its faces are the cliques of its 1-skeleton.  Examples of flag simplicial complexes include  barycentric subdivisions of  simplicial complexes, or more generally  order complexes of  posets.  Gal formulated the following strengthening of the long standing Charney-Davis conjecture \cite{ChD}.
\begin{con}[Gal \cite{Ga}]    If $P$ is a flag simplicial polytope (or more generally a flag simplicial sphere) then $h_P(t)$ is $\gamma$-positive.
\end{con}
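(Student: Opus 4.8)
My plan starts from the observation that Gal's conjecture is, in the generality stated, a well-known open problem --- it strengthens the still-open Charney--Davis conjecture \cite{ChD}, and no proof is known even for all flag simplicial polytopes. So rather than a frontal attack I would pursue the strategy that the rest of this paper exemplifies: prove $\gamma$-positivity for explicit, combinatorially tractable families of flag simplicial polytopes by exhibiting nonnegative formulas for the $\gamma$-coefficients, and regard the general conjecture as out of reach by present methods.

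Concretely, I would first restrict attention to duals of objects whose face structure is combinatorially transparent. For the dual permutohedron, $h_P(t) = A_n(t) = \sum_{\sigma \in \sg_n} t^{\des(\sigma)}$; for the dual stellohedron, $h_P(t) = \tilde A_n(t)$; and more generally dual chordal nestohedra carry $h$-polynomials that are ``flag-friendly'' analogues of the Eulerian polynomials. In each such case the argument has a uniform shape: realize $h_P(t)$ as a descent (or excedance) enumerator over a set $S$ of permutations that is a union of descent classes, then act on $S$ by a valley-hopping / Modified-Foata--Strehl group action; each orbit contributes exactly $t^k(1+t)^{d-2k}$, with multiplicity the number of orbit representatives that have no double descent (and, for the permutohedron, no final descent). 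This is how one obtains (\ref{eulereq}) and (\ref{bineulereq}), hence Gal's conjecture for these polytopes, and --- combined with the Postnikov--Reiner--Williams count --- for all dual chordal nestohedra.

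The genuine obstacle is everything outside such families. For an arbitrary flag sphere there is no combinatorial model for the face numbers, and the only general positivity engine is the $g$-theorem package (Stanley--Reisner rings, hard Lefschetz), which delivers unimodality but not $\gamma$-positivity. Squeezing out the stronger statement forces one to use the flag hypothesis in an essential way --- via a Kruskal--Katona-type bound for flag complexes, via local $h$-polynomials and edge-subdivision arguments, or via some hypothetical ``flag $g$-theorem'' --- and so far none of these inputs has been made to work in general. I therefore expect the main difficulty to be exactly the passage from the purely combinatorial flag condition to an algebraic or geometric statement strong enough to control the whole $\gamma$-vector, and I would not expect the Coxeter- and nestohedron-specific tools of this paper to resolve the conjecture in full.
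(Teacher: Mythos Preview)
Your assessment is correct: the statement is a \emph{conjecture}, not a theorem, and the paper offers no proof of it in general. The paper cites it as Gal's conjecture, notes that it strengthens the Charney--Davis conjecture, and then does exactly what you describe --- verifies it for the specific families $P_n^\ast$ and $St_n^\ast$ (and cites Postnikov--Reiner--Williams for all dual chordal nestohedra) by producing explicit nonnegative formulas for the $\gamma$-coefficients. So your ``proposal'' matches the paper's treatment: no attempt at the general case, only confirmation on combinatorially tractable examples.

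One small correction to your description of method: the paper does not actually carry out the valley-hopping / Foata--Strehl group action on permutations directly. Instead it works at the symmetric-function level, using Gessel's identity (\ref{ges}) to write $Q_n(\x,t)$ in $\gamma$-form with skew-Schur coefficients (Corollary~\ref{gescor}), derives the analogous identity (\ref{BEgammaeq}) for $\tilde Q_n(\x,t)$ by a generating-function computation (Theorem~\ref{gesBcor}), and then specializes via stable principal specialization to recover the $q$-analogues (\ref{introqFSeq}) and (\ref{introqBFSeq}). Setting $q=1$ gives (\ref{eulereq}) and (\ref{bineulereq}). So the positivity ultimately rests on Schur-positivity of skew Schur functions rather than on an orbit-counting argument, though the two approaches are known to be equivalent for these cases.
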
  
Gal's  conjecture has been proved for certain special classes and examples; see \cite[Section 10.8]{Pe}.   One such example is the dual   of the permutohedron.  The permutohedron $P_n$  is the convex hull of the set $\{(\sigma(1),\dots, \sigma(n)) : \sigma \in \sg_n\}$.   The dual permutohedron $P_n^*$ is combinatorially equivalent to the barycentric subdivision of the boundary of the $(n-1)$-simplex.  Clearly $P_n^*$ is a flag simplicial polytope.  It is well known that
$$h_{P_n^*}(t) = A_n(t).$$ Hence by (\ref{eulereq}), $h_{P_n^*}(t)$ is $\gamma$-positive.

We will say that a flag simplicial polytope $P$  {\it exhibits Gal's phenomenon} if  $h_P(t)$ is $\gamma$-positive.  So $P_n^*$ exhibits Gal's phenomenon.  The permutohedron and another polytope called the stellohedron belong to a class of  polytopes called chordal nestohedra.  
In \cite[Section 11.2]{prw} Postnikov, Reiner, and Williams show that the duals of chordal nestohedra exhibit Gal's phenomenon and they give a combinatorial formula for the $\gamma_i$.

Let $\Delta_n$ be the simplex in $\rr^n$ with vertices $0,e_1,\ldots,e_n$, where $e_i$ is the $i^{th}$ standard basis vector.  The {\it stellohedron} $St_n$ is obtained from $\Delta_n$ by truncating all faces not containing $0$ in an order such that if $F,G$ are such faces and $\dim F<\dim G$ then $F$ is truncated before $G$.  Stellohedra are discussed in various papers, including \cite[Section 10.4]{prw} and \cite{CaDe}.

Stellohedra are simple polytopes.  Therefore, each dual polytope $St^\ast_n$ is a simplicial polytope.  If $F$ is a face of a polytope $P$ and $P_F$ is obtained from $P$ by truncating $F$, then $P_F^\ast$ is obtained from $P^\ast$ by stellar subdivision of the dual face $F^\ast$ (see for example \cite[Theorem 2.4]{Ew}).  Therefore, $St^\ast_n$ is (combinatorially equivalent to) the polytope obtained from $\Delta_n$ through stellar subdivision of all faces not contained in the convex hull of $\{e_1,\ldots,e_n\}$ in an order such that if $F,G$ are such faces and $\dim F<\dim G$ then $F$ is subdivided after $G$.

Postnikov, Reiner, and Williams \cite[Section 10.4]{prw} observe that 
$$h_{St^*_n}(t) = \tilde A_n(t).$$
Hence $\gamma$-positivity of $\tilde A_n(t)$ is a consequence of their general result on chordal nestohedra, as is their formula (\ref{PRWeq}).

Associated to any  simplicial polytope $P$ is a toric variety $X(P)$.   Danilov and Jurkiewicz (see \cite[eq.~(26)]{st2}) showed that for  any  simplicial polytope $P$,
$$h_P(t) = \sum_{j \ge 0} \dim H^{2j}(X(P)) t^j,$$ where   $H^{i}(X(P))$ is the degree $i$ singular cohomology of $X(P)$ over $\C$.  
From this, one has the  algebro-geometric interpretation of the Eulerian and binomial-Eulerian polynomials given by,
$$A_n(t) = \sum_{j = 0}^{n-1} \dim H^{2j}(X(P^*_n)) t^j$$
and
$$\tilde A_n(t) = \sum_{j = 0}^n \dim H^{2j}(X(St^*_n)) t^j.$$
The purpose of this section is to discuss equivariant versions of these interpretations.

Any simplicial action of a finite group $G$ on  $P$ determines an action of $G$ on $X(P)$ and thus a representation of $G$ on each cohomology group of $X(P)$.   If $G$ is the symmetric group $\sg_n$, the {\it Frobenius characteristic}, denoted by $\ch$ herein, assigns to each representation (up to isomorphism) of $G$ a symmetric function, as discussed in \cite[Section 7.18]{St}.  The symmetric group $\sg_n$ acts simplicially on $P_n^*$ and  $St_n^*$.
For $P = P_n^*$, Stanley \cite{st2}, using a recurrence of Procesi \cite{Pr}
obtained the  interpretation,
\begin{equation} \label{StanProEq} Q_n(\x,t) = \sum_{j=0}^{n-1} \ch(H^{2j} (X(P_n^*)) t^j .\end{equation}
From this interpretation, Stanley concluded that palindromicity and Schur-unimodality of $Q_n(\x,t)$ are  consequences of  an equivariant version  of the hard Lefschetz theorem.
Here, using (\ref{StanProEq}) and Procesi's technique, we obtain an analogous result for $\tilde Q_n(\x,t)$, which enables us to also interpret palindromicity and unimodality of $\tilde Q_n(\x,t)$  as a consequence of the equivariant version of the  hard Lefschetz theorem.

\begin{thm}  \label{StelloTh} For all $n \ge 1$,
$$\tilde Q_n(\x,t) = \sum_{j=0}^{n} \ch(H^{2j} (X(St_n^*)) t^j.$$
\end{thm}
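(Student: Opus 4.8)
The plan is to reduce Theorem~\ref{StelloTh} to the known interpretation \eqref{StanProEq} for the permutohedral variety, exploiting the fact that the stellohedron $St_n$ is built from the simplex $\Delta_n$ by a sequence of face truncations closely related to those producing the permutohedron. Concretely, I would first recall Procesi's recursive description of the $\sg_n$-equivariant cohomology of $X(P_n^*)$: truncating the simplex at its faces not containing the vertex $0$ versus truncating at all proper faces differ precisely by the faces that do contain $0$, and this difference is what distinguishes $St_n$ from $P_{n+1}$ restricted appropriately. The key structural input is the combinatorial relation between $St^*_n$ and $P^*_k$ for $k \le n$: a stellar subdivision picture shows that $X(St_n^*)$ fibers, stratum by stratum, over pieces built from $X(P_m^*)$ with $h_m(\x)$-weighted complements, mirroring exactly the algebraic identity \eqref{newbinomeq}, namely $\tilde Q_n(\x,t) = \sum_{m=0}^n h_{n-m}(\x)\, Q_m(\x,t)\, t^{n-m}$.

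The main steps, in order, are as follows. First, I would set up Procesi's recurrence (from \cite{Pr}) in the equivariant setting, writing $F_n(\x,t) := \sum_j \ch(H^{2j}(X(St_n^*)))\, t^j$ and deriving a recursion for $F_n$ by analyzing the effect of the final stellar subdivision (or truncation) in the construction of $St_n$ on cohomology, using the standard fact that blowing up along a smooth torus-invariant subvariety adds a controlled contribution to cohomology, and that this is $\sg_n$-equivariant because the group action permutes the truncated faces compatibly. Second, I would identify the "base case / building block" contributions with the $h_{n-m}(\x)$ factors: the faces of $\Delta_n$ not meeting $0$ form a boundary complex of an $(n-1)$-simplex on vertex set $\{e_1,\dots,e_n\}$, and the induced subvarieties carry the trivial-ish $\sg_m$-pieces whose Frobenius characteristics are complete homogeneous symmetric functions, exactly as in the derivation of \eqref{defBQ}. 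Third, I would check that the resulting recursion for $F_n(\x,t)$ coincides with the recursion characterizing $\tilde Q_n(\x,t)$ — either the one in \eqref{defBQ} or equivalently the generating-function identity \eqref{genfuntildeQ} — using \eqref{StanProEq} to supply the $Q_m(\x,t)$ terms. Since both satisfy the same recursion with the same initial data ($F_0 = \tilde Q_0 = 1$, or $F_1 = \tilde Q_1 = h_1 + t h_0$ as a sanity check against $\tilde A_1(q,t) = 1 + t$), they agree, giving the theorem. As a final remark one could note the consistency check that applying $\ch$-to-dimension (i.e. specializing $\x$ to recover ordinary Betti numbers) recovers $h_{St_n^*}(t) = \tilde A_n(t)$ via Danilov--Jurkiewicz, as recorded above.

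The hard part will be the second step: correctly pinning down which $\sg_n$-subrepresentations appear in the cohomology of the intermediate blowups and verifying that their Frobenius characteristics assemble into precisely $h_{n-m}(\x) Q_m(\x,t) t^{n-m}$ rather than some twisted or permuted variant. This requires care because the stellar subdivisions must be performed in a dimension-compatible order (larger faces first, as stipulated in the definition of $St_n$), and one must confirm that this ordering makes each intermediate variety smooth and projective so that hard Lefschetz and the blowup formula apply, and that the order does not affect the final equivariant cohomology (only the combinatorial type of $St_n^*$ matters, which is standard, but the equivariant refinement deserves an explicit remark). Once the recursion is matched, palindromicity and Schur-unimodality of $\tilde Q_n(\x,t)$ follow from the equivariant hard Lefschetz theorem applied to $X(St_n^*)$, exactly paralleling Stanley's argument for $X(P_n^*)$, and recovering Corollary~\ref{SchurUniCor} geometrically.
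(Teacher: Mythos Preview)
Your outline is essentially the paper's approach: iterate Procesi's equivariant blowup formula through the sequence of stellar subdivisions building $St_n^*$ from $\Delta_n$, identify the link of each subdivided face with a smaller dual permutohedron so that \eqref{StanProEq} supplies the $Q_m(\x,t)$ terms, and then match against Proposition~\ref{newbinomprop}.

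One point you underestimate: the blowup recursion does \emph{not} directly yield \eqref{newbinomeq} or \eqref{defBQ}. The faces one subdivides are the $i$-dimensional faces of $\Delta_n$ \emph{containing} $0$; the stabilizer of such a face is $\sg_i\times\sg_{n-i}$, with the first factor acting trivially on $H^+(\pp^i)$ and the second acting on the link as on $X(P^*_{n-i})$. Starting from $X(\Delta_n)=\pp^n$, which contributes $h_n(\x)[n+1]_t$, the recursion therefore produces
\[
\sum_{j} \ch\bigl(H^{2j}(X(St_n^*))\bigr)\,t^j \;=\; h_n(\x)\,[n+1]_t \;+\; \sum_{i=1}^{n-1} t\,[i]_t\, h_i(\x)\, Q_{n-i}(\x,t).
\]
Converting this expression to $\sum_{m=0}^n h_{n-m}(\x)\, Q_m(\x,t)\, t^{n-m}$ is a genuine symmetric-function identity, not a tautology; the paper carries it out using the recurrence $Q_n(\x,t)=h_n(\x)+\sum_{k=0}^{n-2}Q_k(\x,t)h_{n-k}(\x)\,t[n-k-1]_t$ from \cite[Corollary~4.1]{ShWa2}. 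So your ``third step'' is where the real work hides; plan to invoke or reprove that identity rather than treating the match as a routine check.
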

\begin{proof} 
Let $\Delta_n$ be the $n$-simplex with vertex set $\{0, e_1,\dots, e_n\}$.  Let $\ff_i$ be the set of $i$-dimensional faces of   $\Delta_n$  containing $0$.  Let $T_n=\Delta_n$ and, for $1 \leq i \leq n-1$, let $T_i$ be the polytope obtained from $T_{i+1}$ by simultaneous stellar subdivision of all faces in $\ff_i$.  
Note that if $F \in {\mathcal F}_i$ then $F$ is a indeed face of $T_{i+1}$.  Moreover, the link $L_F$ of $F$ in the boundary complex of $T_{i+1}$ has one vertex for each face of the boundary of $\Delta_n$ strictly containing $F$.  Indeed, when applying stellar subdivision to such a face $E$, we remove $E$ and add a cone over the boundary of $E$.  Call the vertex of this cone $\phi(E)$.  The vertices of $L_F$ are all such $\phi(E)$, and a set $\{\phi(E_i)\}$ of such vertices forms a face of $L_F$ if and only if $\{E_i\}$ is a chain in the face poset of the boundary of  $\Delta_n$.  Thus $L_F$  is isomorphic to   the barycentric subdivision of the link of $F$ in the boundary of $\Delta_n$, which is equal to $\bar L_{F \setminus\{0\}}$, the barycentric subdivision of the link of $F\setminus\{0\}$ in the boundary of the $(n-1)$-simplex with vertex set $\{e_1,\dots,e_n\}$.  

Note that $T_1=St^\ast_n$. 
 The action of $\sg_n$ on $\{e_1,\ldots,e_n\}$ by permutation of indices induces a simplicial action on each $T_i$. 
  Thus we can consider the  representations of $\sg_n$ on the cohomology groups of the varieties $X(T_i)$.
  If $F = \{0,e_{i_1},\dots,e_{i_k} \}$, where $1\le i_1 < \dots < i_k \le n$, 
  then $\S_{[n] \setminus \{i_1,\dots, i_k\}}$ acts simplicially on
  $L_F$ and this action is equivalent to the action of $\S_{[n] \setminus\{i_1,\dots, i_k\}}$ on $\bar L_{F \setminus\{0\}}$.  By viewing $L_F$ and $\bar L_{F \setminus\{0\}}$ as simplicial polytopes, we have that these actions induce isomorphic representations of $\S_{[n] \setminus\{i_1,\dots, i_k\}}$ on cohomology of the corresponding varieties $X(L_F)$ and $X(\bar L_{F \setminus\{0\}})$.

For $1 \leq i \leq n$, we write $X_i$ for $X(T_i)$.  Then $X_n$ is the projective space $\pp^n$.  As explained in \cite[Section VI.7]{Ew}, $X_i$ is obtained from $X_{i+1}$ by a series of equivariant blowups.  For each $i \in \{1,\ldots,n\}$ and each $F \in \ff_i$, let $L_F$ be the link of $F$ in the boundary complex of $T_{i+1}$, as above.
As discussed in \cite[Section 3]{Pr}, there is an isomorphism of graded vector spaces,
\begin{equation} \label{isom}
H^\ast(X_i) \cong H^\ast(X_{i+1}) \oplus \bigoplus_{F \in \ff_i} H^\ast(X(L_F)) \otimes H^+(\pp^i),
\end{equation}
where $H^+({\pp^k}):=\oplus_{j>0}H^{2j}(\pp^{k})$.  

In fact, we can extend (\ref{isom}) to an isomorphism of $\sg_n$-representations.  Note that $\sg_n$ acts transitively on $\ff_i$, with the stabilizer of the face $F_i:={\mathsf c}{\mathsf o}{\mathsf n}{\mathsf v}\{0,e_1,\ldots,e_i\}$ being the subgroup $G_i:=\sg_{\{1,\ldots,i\}} \times \sg_{\{i+1,\dots,n\}}$.  
 The factor $\sg_{\{i+1,\ldots,n\}}$ in $G_i$ acts on $H^\ast(X(L_{F_i}))$ as it does on $H^\ast(X(\bar L_{F_i \setminus\{0\}}))$, as mentioned above.
 This is equivalent to the representation of $\sg_{n-i}$ on $H^*(X(P^*_{n-i}))$.   The factor $\sg_{\{1,\ldots,i\}}$ acts trivially on 
 $H^+(\pp^i)$, as explained in \cite[Section 3]{Pr}.

 We see now that the representation of $\sg_n$ on $H^\ast(X_i)$ is 
 the direct sum of the representation on $H^\ast(X_{i+1})$ with the representation induced from that of $G_i$ on $H^\ast(X(L_{F_i})) \otimes H^+(\pp^i)$ determined  by the representations of $\sg_{\{i+1,\ldots,n\}}$ and $\sg_{\{1,\ldots,i\}}$ 
 on the respective tensor factors.  Recalling the well known fact that $H^{2j}(\pp^i)$ has dimension one for $1 \leq j \leq i$ 
 and taking Frobenius characteristics, we obtain, for $1 \leq i \leq n$,
 
 $$
R_i(\x,t)=R_{i+1}(\x,t)+t[i]_th_i(\x)  \sum_{j=0}^{n-i-1} \ch(H^{2j} (X(P_{n-i}^*)) t^j ,
$$
where $$R_i(\x,t) := \sum_{j \ge 0} \ch(H^{2j}(X_i)) t^j.$$
By (\ref{StanProEq}) we may conclude that
 \begin{equation} \label{isom2}R_i(\x,t)=R_{i+1}(\x,t)+t[i]_th_i(\x)  Q_{n-i}(\x,t).
\end{equation}

By induction we have 
 $$
R_i(\x,t)=h_n(\x) [n+1]_t+\sum_{m=i}^{n-1}t[m]_th_{m}(\x) Q_{n-m}(\x,t) .
$$

Setting $i=1$ yields,

\begin{equation} \label{procmeth}
\sum_{j=0}^{n} \ch(H^{2j} (X(St_n^*)) t^j =h_n(\x) [n+1]_t+\sum_{m=1}^{n-1}t[n-m]_th_{n-m}(\x)Q_m(t). \end{equation}

We will  manipulate the symmetric function on the right side of (\ref{procmeth}) to obtain the desired result.  Setting $r=1$ in \cite[Corollary 4.1]{ShWa2}, we obtain

\begin{equation} \label{qeq}
Q_n(\x,t)=h_n(\x) +\sum_{k=0}^{n-2}Q_k(\x,t)h_{n-k}(\x) t[n-k-1]_t.
\end{equation}

Now
$$h_n(\x) [n+1]_t+\sum_{m=1}^{n-1}h_{n-m}(\x) Q_m(\x,t)t[n-m]_t \phantom{=h_n(\x) [n+1]_t+h_1(\x) Q_{n-1}(\x,t)t-h_nt[n]_t}$$
\vspace{-.2in}\begin{eqnarray*}
&=& h_n(\x) [n+1]_t+h_1(\x)Q_{n-1}(\x,t)t-h_n(\x)t[n]_t 
 \\& & +\sum_{m=0}^{n-2}h_{n-m}(\x)Q_m(\x,t)t[n-m]_t
  \\ & = & h_n(\x) +h_1(\x) Q_{n-1}(\x,t)t
   +\sum_{m=0}^{n-2}h_{n-m}(\x)Q_m(\x,t)t[n-m-1]_t 
   \\ & & +\sum_{m=0}^{n-2}h_{n-m}(\x)Q_m(\x,t)t^{n-m}
   \\ & = & Q_n(\x,t)+h_1(\x)Q_{n-1}(\x,t)t+\sum_{m=0}^{n-2}h_{n-m}(\x)Q_m(\x,t)t^{n-m} 
   \\ & = & \sum_{m=0}^{n}h_{n-m}(\x)Q_m(\x,t)t^{n-m},
\end{eqnarray*}
the third equality following from (\ref{qeq}).  
The result now follows from (\ref{procmeth}) and Proposition~\ref{newbinomprop}.
\end{proof}

\begin{cor}  For $0 \le j \le n-1$, $$\ps_q (\ch(H^{2j} (X(P_n^*)) ) =  \frac{a_{n,j}(q)} {(1-q) \dots (1-q^n)}$$
and for $0 \le j \le n$,
$$\ps_q (\ch(H^{2j} (X(St_n^*)) )=  \frac{\tilde a_{n,j}(q)} {(1-q) \dots (1-q^n)}.$$
\end{cor}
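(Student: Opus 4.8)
The plan is to deduce this corollary from the two Frobenius-characteristic identities already in hand, namely \eqref{StanProEq} for $X(P_n^*)$ and Theorem~\ref{StelloTh} for $X(St_n^*)$, by applying the stable principal specialization $\ps_q$ coefficientwise in $t$. First I would recall that $\ps_q$ is a ring homomorphism on $\Lambda$ that extends coefficientwise to $\Lambda[t]$, so from \eqref{StanProEq} one immediately gets
$$\ps_q(Q_n(\x,t)) = \sum_{j=0}^{n-1} \ps_q\big(\ch(H^{2j}(X(P_n^*)))\big)\, t^j,$$
and from Theorem~\ref{StelloTh} the analogous identity with $\tilde Q_n(\x,t)$, $St_n^*$, and the index $j$ running up to $n$.

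The second step is to invoke Theorem~\ref{SWps}, which says $\ps_q(Q_n(\x,t)) = A_n(q,t)/\prod_{i=1}^n(1-q^i)$, and Corollary~\ref{TildeSWps}, the analogous statement for $\tilde Q_n(\x,t)$ and $\tilde A_n(q,t)$. Writing $A_n(q,t) = \sum_{j=0}^{n-1} a_{n,j}(q) t^j$ and $\tilde A_n(q,t) = \sum_{j=0}^n \tilde a_{n,j}(q) t^j$ as in the definitions \eqref{qEulerDef} and the definition of $\tilde A_n(q,t)$, the two expansions of $\ps_q(Q_n(\x,t))$ (respectively $\ps_q(\tilde Q_n(\x,t))$) in powers of $t$ must agree. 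Comparing coefficients of $t^j$ then yields
$$\ps_q\big(\ch(H^{2j}(X(P_n^*)))\big) = \frac{a_{n,j}(q)}{(1-q)\cdots(1-q^n)}$$
for $0 \le j \le n-1$, and likewise $\ps_q(\ch(H^{2j}(X(St_n^*)))) = \tilde a_{n,j}(q)/((1-q)\cdots(1-q^n))$ for $0 \le j \le n$, which is exactly the assertion.

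There is essentially no obstacle here: the corollary is a formal consequence of equating $t$-coefficients in two already-established identities, using that $\ps_q$ acts coefficientwise. The only point requiring a sentence of care is the validity of comparing coefficients of $t^j$ on both sides — this is legitimate because both sides are genuine polynomials in $t$ (of the stated degrees) with coefficients in the ring of power series in $q$, so equality as polynomials forces equality of coefficients. One might also remark, for completeness, that this gives a representation-theoretic refinement of Theorem~\ref{SWps} and Corollary~\ref{TildeSWps}: the $q$-Eulerian and $q$-binomial-Eulerian numbers are, up to the common denominator $\prod_{i=1}^n(1-q^i)$, the principal specializations of the Frobenius characteristics of the even cohomology of the toric varieties of the dual permutohedron and dual stellohedron.
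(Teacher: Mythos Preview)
Your proposal is correct and follows exactly the paper's approach: the paper's proof simply cites (\ref{StanProEq}) together with Theorem~\ref{SWps} for the first equation, and Theorem~\ref{StelloTh} together with Corollary~\ref{TildeSWps} for the second. Your write-up just spells out the coefficient comparison in $t$ explicitly.
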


\begin{proof} The first equation is a consequence of (\ref{StanProEq}) and Theorem~\ref{SWps}, while the second equation is a consequence of Theorem~\ref{StelloTh} and Corollary~\ref{TildeSWps}.
\end{proof}

The next result follows from combining (\ref{StanProEq}) with Corollary~\ref{gescor} and combining Theorem~\ref{StelloTh} with Theorem~\ref{gesBcor}. 
\begin{cor} \label{equigalcor} For $P \in \{P_n^*, \, St_{n-1}^*\}$,  the polynomial
$\sum_{j=0}^{n-1} \ch(H^{2j} (X(P)) t^j$ is Schur-$\gamma$-positive.
\end{cor}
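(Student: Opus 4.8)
The plan is to handle the two polytopes separately, and in each case to recognize the Poincar\'e--Frobenius series $\sum_{j\ge 0} \ch(H^{2j}(X(P)))\, t^j$ as one of the two symmetric function polynomials whose Schur-$\gamma$-positivity has already been established, and then simply quote that result.

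For $P = P_n^*$, I would invoke the Procesi--Stanley identity (\ref{StanProEq}), namely $\sum_{j=0}^{n-1} \ch(H^{2j}(X(P_n^*)))\, t^j = Q_n(\x,t)$, and then apply Corollary~\ref{gescor}, which gives the expansion $Q_n(\x,t) = \sum_k \gamma_{n,k}(\x)\, t^k (1+t)^{n-1-2k}$ with each $\gamma_{n,k}(\x)$ a sum of skew Schur functions, hence Schur-positive. That is exactly the asserted Schur-$\gamma$-positivity. For $P = St_{n-1}^*$, I would apply Theorem~\ref{StelloTh} with $n$ replaced by $n-1$, obtaining $\sum_{j=0}^{n-1} \ch(H^{2j}(X(St_{n-1}^*)))\, t^j = \tilde Q_{n-1}(\x,t)$, and then quote Theorem~\ref{gesBcor}, which writes $\tilde Q_{n-1}(\x,t) = \sum_k \tilde\gamma_{n-1,k}(\x)\, t^k (1+t)^{n-1-2k}$ with each $\tilde\gamma_{n-1,k}(\x)$ Schur-positive. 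Combining the two cases yields the statement.

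Since both halves are direct substitutions into results proved earlier in the paper, I do not expect any genuine obstacle; the substantive content lies entirely in (\ref{StanProEq}), Theorem~\ref{StelloTh}, Corollary~\ref{gescor}, and Theorem~\ref{gesBcor}. The one place meriting a moment's attention is the index bookkeeping: the corollary is phrased with the common upper summation limit $n-1$ for both polytopes, which forces the stellohedral model to be $St_{n-1}^*$ rather than $St_n^*$; one checks that the cohomology of $X(St_{n-1}^*)$ is concentrated in degrees $0,2,\dots,2(n-1)$, so Theorem~\ref{StelloTh} (applied to $St_{n-1}^*$) matches the stated range term by term. This is the ``hard part,'' and it is merely a matter of aligning indices.
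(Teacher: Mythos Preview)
Your proposal is correct and matches the paper's own argument exactly: the paper states that the corollary follows from combining (\ref{StanProEq}) with Corollary~\ref{gescor} and combining Theorem~\ref{StelloTh} with Theorem~\ref{gesBcor}, which is precisely what you do (including the index shift $n\to n-1$ for the stellohedron).
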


Corollary~\ref{equigalcor}  suggests an equivariant version of Gal's phenomenon.
\begin{defn} Let $P$ be a flag simplicial  $d$-dimensional polytope on which  a finite group $G$ acts simplicially.   The action of $G$ induces a graded representation of $G$ on cohomology of the associated toric variety $X(P)$.  We say that $(P,G)$ exhibits the {\em equivariant Gal phenomenon} if 
there exist $G$-modules $\Gamma_{P,k} $ such that  $$ \sum_{j=0}^d H^{2j}(X({P})) t^j = \sum_{k=0}^{\lfloor {d \over 2} \rfloor} \Gamma_{P,k} \,\,t^k (1+t)^{d-2k}.$$  
\end{defn}

Corollary~\ref{equigalcor} says that $(P_n^*,\sg_n)$ and $(St_n^*,\sg_n)$ both exhibit the equivariant Gal phenomenon.  

It is not the case that every group action on a flag simplicial polytope exhibits the equivariant Gal phenomenon.  Indeed, for $i \in [n]$, let $e_i$ be the $i^{th}$ standard basis vector in ${\mathbb R}^n$. Consider the cross-polytope $CP^n$, which is the convex hull of $\{\pm e_i:i \in [n]\}$.  It is straightforward to see that (the boundary of) $CP^n$ is a flag simplicial polytope.  The convex hull of some set $S$ of vertices of $CP^n$ is a boundary face if and only if there is no $i$ such that $S$ contains both $e_i$ and $-e_i$.

Let $T \leq GL_n({\mathbb R})$ be the group of all diagonal matrices whose nonzero entries are $1$ or $-1$ and let $S \leq GL_n({\mathbb R})$ be the set of all $n \times n$ permutation matrices.  The semidirect product $W=S \ltimes T$ preserves $CP^n$.  It is well known and not hard to see that the $h$-polynomial of $CP^n$ is $(1+q)^n$.  The action of $W$ on $H^0(X(CP^n))$ is trivial.  It follows that if $G \leq W$ and $(CP^n,G)$ exhibits the equivariant Gal phenomenon, then $G$ acts trivially on $H^\ast(X(CP^n))$.

Consider the element $c \in W$ satisfying $e_1c=e_{2}$, $e_2c=-e_1$ and $e_ic=-e_i$ for $3 \leq i \leq n$.  Note that $c$ and $c^3$ fix no boundary face of $CP^n$ and that $c^2$ fixes those boundary faces not including any of $\pm e_1$, $\pm e_2$.  It follows that the action of $C$ on $CP^n$ is proper, that is, the stabilizer in $C$ of any face $F$ of $CP^n$ fixes $F$ pointwise.  This allows us to apply results of Stembridge.  We observe that
$$
\det(I-qc)=(1+q^2)(1+q)^{n-2}.
$$
On the other hand, according to Theorem 1.4 and Corollary 1.6 of \cite{Ste2}, any $w \in W$ not having $1$ as an eigenvalue and acting trivially on $H^\ast(X(CP^n))$ satisfies
$$
\det(I-qw)=(1+q)^n.
$$
(Indeed, using the notation from \cite{Ste2}, any such $w$ satisfies $P_{\Delta^w}(q)=1$ and $\delta(w)=0$.)

We see that if $G \leq W$ contains (any conjugate of) $c$, then $(CP^n,G)$ does not exhibit the equivariant Gal phenomenon.  It would be interesting to find  classes, beyond $(P_n^*,\S_n)$ and $(St_n^*,\sg_n)$ that exhibit the equivariant Gal phenomenon.


\section{Remarks on  derangement polynomials} \label{DerSec}

One can modify the $q$-Eulerian polynomials $A_n(q,t)$ and $q$-Eulerian numbers $a_{n,j}(q)$ by summing over all derangements in $\sg_n$ instead of over all permutations in $\sg_n$.  That is, let $\mathcal D_n$ be the set of derangements in $\sg_n$ and 
 let
 $$D_n(q,t) := \sum_{\sigma \in \mathcal D_n}q^{\maj(\sigma) -\exc(\sigma)} t^{\exc(\sigma)},$$
for $n \ge 1$, and let $D_n(q,t) := 1$ for $n =0$.  Since $D_n(q,t) = A_n(q,t,0)$, it follows from (\ref{qfixEulereq}) that  
\begin{equation}\label{qDerEulereq} \sum_{n \ge 0 } D_n(q,t) \frac {z^n} {[n]_q!} = \frac{1-t} {\exp_q(tz) - t\exp_q(z)}.\end{equation}

Recall from the alternative proof of  Corollary~\ref{quniCor} that $D_n(q,t)$ is palindromic and $q$-unimodal.   
 (This result was first noted by the authors in \cite{ShWa2} and the $q=1$ case was proved earlier by  Brenti \cite{Br}.) There is  an analogous symmetric function result  conjectured by Stanley \cite{st2} and proved by Brenti \cite{Br}. The analogous  symmetric function result says that the symmetric function polynomial $Q^0_n(\x,y)$ is palindromic, Schur-positive and Schur-unimodal, where 
 $Q^0_n(\x,t)$ is defined by 
\begin{equation} \label{defDQ} \sum_{n \ge 0} Q^0_n(\x,t) z^n := \frac{1-t} {H(tz) - tH(z)}. \end{equation} 
An algebro-geometric interpretation of this result was given subsequently by Stanley (see \cite[page 825]{st4}), who determined the representation of the symmetric group on the graded local face module associated with the barycentric subdivision of the simplex.

A formula of Gessel shows that $Q^0_n(\x,t)$ is, in fact, Schur-$\gamma$ positive (see \cite[Equation (7.9)]{ShWa2}). 
  Let 
$$\gamma^{0}_{n,k}(\x) := \sum_{D \in  \mathcal H^{0}_{n,k}} s_D(\x),$$ where
$\mathcal H^0_{n,k}$ is  the set of skew hooks of size $n$ for  which $k$ columns have size 2 and  the remaining $n-2k$ columns, including the first and last column, have size 1.  Gessel's formula is equivalent  to 
\begin{equation} \label{DGesTh} Q^0_{n}(\x,t) = \sum_{k=0}^{\lfloor \frac {n-2} 2 \rfloor} \gamma^0_{n,k}(\x)  \, t^k (1+t)^{n-2-2k}, \end{equation}   for all $n \ge 1$.

 It is mentioned in \cite[Remark 5.5]{ShWa2}  that Gessel's formula
  can be used to establish $q$-$\gamma$-positivity of $D_n(q,t)$.  However an explicit description of the 
  $\gamma$-coefficients is not given there.  By applying
 stable principal specialization to (\ref{DGesTh}), one obtains the following description of the $\gamma$-coefficients.  This result is proved in \cite[Equation (1.3) and Theorem 3.3]{LiShWa} without the use of  Gessel's formula.  It appears also in \cite{LiZe}.
 
\begin{thm}  For  $0 \le k \le n $,   let $\Gamma^0_{n,k}$ be the set of permutations $\sigma \in \sg_n$ with no double descents, no intial descent, no final descent, and with $\des(\sigma)=k$. Let 
$$\gamma^0_{n,k}(q) := \sum_{\sigma \in \Gamma^0_{n,k}} q^{\inv(\sigma) } \,\,\,\,\,\, (= \sum_{\sigma \in \Gamma^0_{n,k}} q^{\maj(\sigma^{-1}) }).$$
Then \begin{equation} \label{qDFSeq} D_n(q,t) = \sum_{k=0}^{\lfloor \frac {n} 2 \rfloor} \gamma^0_{n,k}(q) \, t^k (1+t)^{n-2k}. \end{equation}  
Consequently,  $D_n(q,t)$ is $q$-$\gamma$-positive.
\end{thm}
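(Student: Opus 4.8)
The plan is to take the stable principal specialization $\ps_q$ of Gessel's identity (\ref{DGesTh}) and then argue exactly as in the proof of Theorem~\ref{qFSth}; the only genuinely new ingredient is the bookkeeping for initial descents. First I would record the derangement analog of Theorem~\ref{SWps}: just as Theorem~\ref{SWps} comes from applying $\ps_q$ to (\ref{defQ}), applying $\ps_q$ to the defining generating function (\ref{defDQ}) and comparing with (\ref{qDerEulereq}) gives
$$\ps_q\bigl(Q^0_n(\x,t)\bigr) = \frac{D_n(q,t)}{(1-q)\cdots(1-q^n)},$$
using $\ps_q(h_n) = 1/\bigl((1-q)^n[n]_q!\bigr)$ (so that $\ps_q(H(z)) = \exp_q\!\bigl(z/(1-q)\bigr)$) together with $D_n(q,t) = A_n(q,t,0)$.

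Next I would apply $\ps_q$ to the polynomial identity (\ref{DGesTh}) and evaluate the right-hand side. By (\ref{psSchur}), for a skew hook $D$ of size $n$ one has $\ps_q(s_D) = \bigl(\sum_{T\in SYT_D} q^{\maj(T)}\bigr)\big/\bigl((1-q)\cdots(1-q^n)\bigr)$. As in the proof of Theorem~\ref{qFSth}, reading the entries of $T\in SYT_D$ from southwest to northeast defines a bijection $\varphi$ from $SYT_D$ onto the descent class of a fixed set $J_D\subseteq[n-1]$ determined by the shape of $D$, with $\Des(T)=\Des(\varphi(T)^{-1})$. The point to verify is that as $D$ ranges over $\mathcal H^0_{n,k}$, the sets $J_D$ range precisely over the descent sets occurring among permutations in $\Gamma^0_{n,k}$: each column of $D$ of size $2$ creates a descent of $\varphi(T)$, the absence of any column of size $\ge 3$ (which matches ``no double descents'' on the permutation side) makes $\des(\varphi(T))$ equal to the number $k$ of size-$2$ columns, and the requirements that the \emph{first} and the \emph{last} columns of $D$ have size $1$ translate respectively into the absence of an initial descent and of a final descent. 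Hence $\Gamma^0_{n,k}=\bigsqcup_{D\in\mathcal H^0_{n,k}}\varphi(SYT_D)$, so
$$\sum_{D\in\mathcal H^0_{n,k}}\ \sum_{T\in SYT_D} q^{\maj(T)}\;=\;\sum_{\sigma\in\Gamma^0_{n,k}} q^{\maj(\sigma^{-1})}.$$

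Then, since $\Gamma^0_{n,k}$ is a union of descent classes, I would invoke the Foata--Sch\"utzenberger theorem \cite[Theorem 1]{FoSc2} that $\inv(\sigma)$ and $\maj(\sigma^{-1})$ are equidistributed on descent classes to get $\sum_{\sigma\in\Gamma^0_{n,k}} q^{\maj(\sigma^{-1})}=\sum_{\sigma\in\Gamma^0_{n,k}} q^{\inv(\sigma)}=\gamma^0_{n,k}(q)$ (which incidentally proves the parenthetical identity in the statement), so that $\ps_q(\gamma^0_{n,k}(\x))=\gamma^0_{n,k}(q)\big/\bigl((1-q)\cdots(1-q^n)\bigr)$. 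Substituting this and the formula for $\ps_q(Q^0_n(\x,t))$ into $\ps_q$ of (\ref{DGesTh}) and clearing the common factor $(1-q)\cdots(1-q^n)$ yields (\ref{qDFSeq}) (the summands with $k>\lfloor(n-2)/2\rfloor$ contribute nothing, since $\Gamma^0_{n,k}=\emptyset$ there, and the case $n=0$ is trivial), and the $q$-$\gamma$-positivity of $D_n(q,t)$ follows at once because each $\gamma^0_{n,k}(q)=\sum_{\sigma\in\Gamma^0_{n,k}} q^{\inv(\sigma)}$ is $q$-positive. I expect the only delicate step to be the combinatorial identification in the second paragraph --- specifically, matching the new condition ``no initial descent'' with ``the first column of the skew hook has size $1$''; everything else is a direct transcription of the argument already carried out for $A_n(q,t)$.
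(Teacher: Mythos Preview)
Your approach is exactly what the paper indicates: apply the stable principal specialization to Gessel's identity (\ref{DGesTh}) and run the argument of Theorem~\ref{qFSth} verbatim, the only new bookkeeping being that ``first column has size $1$'' in $\mathcal H^0_{n,k}$ matches ``no initial descent'' in $\Gamma^0_{n,k}$, which you identify correctly. One caveat you pass over without comment: as printed, (\ref{DGesTh}) carries $t^{k}(1+t)^{n-2-2k}$ while (\ref{qDFSeq}) carries $t^{k}(1+t)^{n-2k}$, so specializing the former does not literally yield the latter; in fact both displays should read $t^{k+1}(1+t)^{n-2-2k}$ (already for $n=2$ one has $D_2(q,t)=t$, whereas the stated (\ref{qDFSeq}) gives $(1+t)^2$), and with that correction your derivation goes through exactly as written.
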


As discussed in Stanley \cite{st4}, the Poincar\'e polynomial of the graded local face module associated with a certain type of subdivision of a simplicial complex is equal to the local $h$-polynomial associated with the subdivision, which in the case of  the barycentric subdivision of the $(n-1)$-simplex is  equal to $D_n(1,t)$.  In \cite{At1} Athanasiadis considers $\gamma$-positivity of local $h$-polynomials and formulates a generalization of Gal's conjecture for local $h$-polynomials, which would provide a geometric interpretation of $\gamma$-positivity of $D_n(1,t)$; see also \cite{At2,At3}.  
One could also consider an equivariant version of Gal's phenomenon in the local setting.

We remark that in \cite{LiShWa} the authors and Linusson consider  multiset versions of the Eulerian polynomial $A_n(t)$ and the derangement polynomial $D_n(1,t)$ and show that they  are $\gamma$-positive.  A  generalization of (\ref{eulereq}) is given   in \cite[Equation (5.4)]{LiShWa} and a generalization of the $q=1$ case of (\ref{qDFSeq})  is given in \cite[Equation (5.3)]{LiShWa}.

\section*{Acknowledgements}
We thank the referees for carefully reading the paper and providing useful comments.

\end{document}